\newtheorem{theorem}{Theorem}[section]
\newtheorem{lemma}[theorem]{Lemma}
\newtheorem{definition}[theorem]{Definition}
\newtheorem{proposition}[theorem]{Proposition}
\newtheorem{corollary}[theorem]{Corollary}
\newtheorem{problem}[theorem]{Problem}
\journal{xxx}
\newcommand \equ[2]
\newcommand \eqn[2]
\begin{document}

\begin{frontmatter}

\title{On the maximum local mean order of sub-$k$-trees of a $k$-tree}

\author{Zhuo Li$^{1}$, \ \ Tianlong Ma$^{1}$, \ \ Fengming Dong$^{2}$, \ \ Xian'an Jin$^{1}$\footnote{Corresponding author.}\\
\small $^{1}$School of Mathematical Sciences\\[-0.8ex]
\small Xiamen University\\[-0.8ex]
\small P. R. China\\
\small $^{2}$National Institute of Education\\[-0.8ex]
\small Nanyang Technological University\\[-0.8ex]
\small Singapore\\
\small\tt Email: zhuoli2000@aliyun.com, tianlongma@aliyun.com, fengming.dong@nie.edu.sg, xajin@xmu.edu.cn}

\begin{abstract}
For a $k$-tree $T$, a generalization of a tree, the local mean order of sub-$k$-trees of $T$ is the average order of sub-$k$-trees of $T$ containing a given $k$-clique. The problem whether the largest local mean order of a tree (i.e., a $1$-tree) at a vertex always takes on at a leaf was asked by Jamison in 1984 and was answered by Wagner and Wang in 2016. In 2018, Stephens and Oellermann asked a similar problem: for any $k$-tree $T$, does the maximum local mean order of sub-$k$-trees containing a given $k$-clique occur at a $k$-clique that is not a major $k$-clique of $T$? In this paper, we give it an affirmative answer.
\end{abstract}

\begin{keyword}
tree, $k$-tree, local mean order, characteristic $1$-tree, Kelmans operation
\end{keyword}

\end{frontmatter}

%\linenumbers
\section{Introduction}
As the analog of trees in higher dimensions, Beineke and Pippert introduced the concept of $k$-trees in \cite{Beineke} for any integer $k\ge 1$, which can be defined recursively as follows.
\begin{definition}
Fix an integer $k\geq1$.
\begin{description}
\item[(1)] The complete graph $K_k$ is a $k$-tree.
\item[(2)] If $T$ is a $k$-tree, then so is the graph obtained from $T$ by adding a new vertex adjacent to all vertices of some $k$-clique of $T$.
\end{description}
\end{definition}

We use $T$ to denote a $k$-tree of order $n$ and $C$ to denote a $k$-clique in $T$. A $k$-tree $T$ is \emph{trivial} if $T\cong K_k$ and \emph{non-trivial} otherwise. A \emph{sub-$k$-tree} of a $k$-tree $T$ is a subgraph of $T$ that is itself a $k$-tree. A $1$-tree yields precisely a tree and a sub-$1$-tree yields a subtree.

The mean order of subtrees and the subtree polynomial of a tree were first studied by Jamison \cite{Jamison1,Jamison2} in the 1980's. In 2018, Stephens and Oellermann \cite{Stephens} extended these concepts from trees to $k$-trees.

Let $a_i(T)$ denote the number of sub-$k$-trees of order $i$ in $T$, where $i\ge k$.
The \emph{sub-$k$-tree polynomial} of a $k$-tree $T$, denoted by $\Phi_T (x)$, is the generating function
for $a_i(T)$, that is,
\[\Phi_T (x)=\sum_{i=k}^{n}a_i(T)x^i.\]
The \textit{global mean order} of sub-$k$-trees of a $k$-tree $T$ is defined by
\begin{align}\notag
\mu(T)= \frac{\Phi'_T (1)}{\Phi_T (1)},
\end{align}
that is, the average order of all sub-$k$-trees of $T$.

For a fixed sub-$k$-tree $S$ of a $k$-tree $T$, let $a_i(T;S)$ denote the number of sub-$k$-trees of order $i$ in $T$ containing $S$. The \emph{local sub-$k$-tree polynomial} of $T$ at $S$ is defined by
\[\phi_{T,S}(x)=\sum_{i=|V(S)|}^{n}a_i(T;S)x^i.\]
Correspondingly, the \emph{local mean order} of sub-$k$-trees containing a given sub-$k$-tree $S$ of a $k$-tree $T$ is defined by
\begin{align}\notag
\mu(T;S)= \frac{\phi'_{T,S} (1)}{\phi_{T,S} (1)},
\end{align}
that is, the average order of all sub-$k$-trees of $T$ containing $S$.

There are many studies on the mean order of subtrees of a tree. In 1983, Jamison \cite{Jamison1} proved that the global mean order of a tree of order $n$ is at least $\frac{n+2}{3}$, and equality is achieved only by a path. In 1984, Jamison \cite{Jamison2} studied the monotonicity problem and proved that the global mean order of a tree becomes smaller after contracting its any edge. Recently, Luo, Xu, Wagner and Wang \cite{Luo} further proved that the global mean order of a tree $T$ does not decrease by more than $\frac{1}{3}$ after contracting a pendant edge. Several open problems and conjectures posed in \cite{Jamison1,Jamison2} were subsequently solved or partially solved in \cite{Cambie, Haslegrave, Luo, Mol, Vince, Wagner1, Wagner2}. In particular, Jamison \cite{Jamison1} asked whether the largest local mean order of subtrees of a tree at a vertex always takes on at a leaf for any tree. In 2016, Wagner and Wang \cite{Wagner2} studied this problem and obtained the following result.

\begin{theorem}[\cite{Wagner2}]\label{Theorem 1.2}
The maximum local mean order of subtrees of a tree at a vertex occurs either at a leaf or at a vertex of degree 2.
\end{theorem}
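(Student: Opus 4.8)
The plan is to reduce each local mean $\mu(T;v)$ to a sum of independent branch contributions, derive from this a clean formula comparing $\mu$ at the two ends of an edge, and then prove the sharper statement that \emph{every} vertex of degree at least $3$ is strictly beaten by one of its neighbours; this forces the maximum onto a vertex of degree at most $2$. First I would fix $v$ of degree $d$ with neighbours $u_1,\dots,u_d$, let $B_i$ be the component of $T-v$ containing $u_i$, and let $g_i(x)$ be the generating polynomial of the subtrees of $B_i$ that contain $u_i$. Since a subtree through $v$ chooses independently in each branch either nothing or a subtree through $u_i$, one gets
\[
\phi_{T,v}(x)=x\prod_{i=1}^{d}\bigl(1+g_i(x)\bigr),\qquad \mu(T;v)=1+\sum_{i=1}^{d}\beta_i,\quad \beta_i=\frac{g_i'(1)}{1+g_i(1)},
\]
the second identity coming from the logarithmic derivative at $x=1$. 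Writing $c_i:=g_i(1)$ for the number of such subtrees, the fact that their mean order is at least $1$ yields the uniform bound $\beta_i\ge c_i/(c_i+1)\ge 1/2$, which is the only analytic input needed later.

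Next I would record the edge-comparison identity. Splitting $T$ along an edge $uv$ into the parts $T_u\ni u$ and $T_v\ni v$ with rooted subtree polynomials $A$ and $B$, one has $\phi_{T,u}=A(1+B)$ and $\phi_{T,v}=B(1+A)$, whence $\mu(T;u)-\mu(T;v)=\theta(T_u)-\theta(T_v)$, where $\theta(R):=\rho'(1)/\bigl(\rho(1)(\rho(1)+1)\bigr)$ for a rooted part with polynomial $\rho$. Applying this to the edge $vu_i$ and feeding in the branch decomposition collapses the two sides to the explicit quantities $\theta(T_{u_i})=\beta_i/c_i$ and $\theta(T_v^{(i)})=\bigl(1+\sum_{j\ne i}\beta_j\bigr)\big/\bigl(\prod_{j\ne i}(1+c_j)+1\bigr)$, where $T_v^{(i)}=T-B_i$ is rooted at $v$. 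Thus moving from $v$ to $u_i$ increases $\mu$ exactly when $\theta(T_{u_i})>\theta(T_v^{(i)})$.

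The decisive step is to show that if $d\ge 3$ then some neighbour does increase $\mu$. I would step into the branch, say branch $1$, that maximises $\theta(T_{u_i})=\beta_i/c_i=:\theta_1$, so that $\beta_j\le\theta_1 c_j$ for all $j$. Substituting this into the numerator of $\theta(T_v^{(1)})$ reduces the target inequality $\theta(T_v^{(1)})<\theta_1$ to $1<\theta_1\bigl(\prod_{j\ge 2}(1+c_j)-\sum_{j\ge 2}c_j+1\bigr)$, and expanding the product (legitimate because $d-1\ge 2$ factors remain) bounds the bracket below by $\prod_{j\ge 2}c_j+2$. If $v$ has a leaf neighbour then $\theta_1\ge\tfrac12$ and the right-hand side is at least $\tfrac32$; otherwise all $c_j\ge 2$, and combining $\theta_1\ge\beta_2/c_2\ge 1/(2c_2)$ with $\prod_{j\ge 2}c_j\ge c_2c_3$ makes it at least $c_3/2+1/c_2>1$. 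Either way $\mu(T;u_1)>\mu(T;v)$, so no vertex of degree at least $3$ can be a maximiser, and the maximum is attained at a leaf or a degree-$2$ vertex.

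I expect the main obstacle to be the choice of neighbour rather than the estimates themselves. A naive ``walk to a better neighbour'' need not terminate, since $\theta$ is not monotone as a rooted branch grows — attaching extra branches can lower it — so there is no obvious potential to descend along. The clean way around this is the strict statement above, whose force comes from always stepping into the branch of largest $\theta_i$; the remaining delicacy is securing strict inequality in the borderline configurations (all branches single leaves, or path-like branches), which is exactly where the bound $\beta_j\ge\tfrac12$ and the product-versus-sum expansion are used.
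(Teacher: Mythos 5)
Your proof is correct — I checked the logarithmic-derivative formula $\mu(T;v)=1+\sum_i\beta_i$, the edge-comparison identity $\mu(T;u)-\mu(T;v)=\theta(T_u)-\theta(T_v)$ with $\theta(R)=\rho'(1)/\bigl(\rho(1)(\rho(1)+1)\bigr)$, and both closing estimates ($\theta_1\ge\tfrac12$ when some branch is a leaf; $\tfrac{1}{2c_2}(c_2c_3+2)=\tfrac{c_3}{2}+\tfrac{1}{c_2}>1$ when all $c_j\ge2$), and they all go through, giving the strict statement that every vertex of degree at least $3$ is beaten by some neighbour. This is, however, a genuinely different route from the paper's: the paper does not actually prove Theorem \ref{Theorem 1.2} at all — it quotes it from Wagner and Wang, and the one ingredient it needs in that direction (Lemma \ref{treelemma}, which is exactly the strengthened statement you prove) is likewise imported rather than proved. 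What the paper does prove with its own machinery is the Kelmans-operation comparison (Theorem \ref{mainKO}), whose analytic engine is Jamison's \emph{upper} bound $\phi'_{T,u}(1)/(1+\phi_{T,u}(1))\le\phi_{T,u}(1)/2$ (Lemma \ref{lemma2.1}); from that it only recovers the leaf-versus-neighbour special case (Proposition \ref{Col2.3}), and then lifts everything to $k$-trees via characteristic $1$-trees and partial Kelmans operations. Your argument instead rests on the \emph{lower} bound $\beta_i\ge c_i/(c_i+1)\ge\tfrac12$ and on stepping into the branch maximizing $\beta_i/c_i$, which correctly sidesteps the non-monotonicity of $\theta$ that you flag. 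The trade-off: your proof is self-contained and directly yields the stronger Lemma \ref{treelemma} that the paper must cite, while the paper's Kelmans framework, though weaker for this particular statement, is what generalizes to sub-$k$-trees and major $k$-cliques in Theorem \ref{k-trees}.
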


For a $k$-clique $C$ in a $k$-tree $T$, the \emph{degree} of $C$, denoted by $deg_T(C)$, is defined to be the number of $(k+1)$-cliques of $T$ containing $C$. A \emph{major} $k$-clique is a $k$-clique in $T$ with degree at least $3$. In this paper we are mainly interested in the following problem posed by Stephen and Oellermann in \cite{Stephens}, which can be viewed as a generalization of Theorem \ref{Theorem 1.2}.

\begin{problem}[\cite{Stephens}, \textbf{Problem 4}]\label{mainproblem}
For any $k$-tree $T$, does the maximum local mean order of sub-$k$-tree containing a given $k$-clique occur at a $k$-clique that is not a major $k$-clique of $T$?
\end{problem}

Motivated by this problem, we first study the effect of the Kelmans operation on the local mean order of a tree at a vertex.  Then by converting a $k$-tree to its characteristic $1$-tree and using the partial Kelamans operation on trees, we obtain an affirmative answer to Problem \ref{mainproblem}.

\begin{theorem}\label{MainTheorem}
For any $k$-tree $T$, the maximum local mean order of sub-$k$-tree containing a given $k$-clique occurs at a $k$-clique that is not a major $k$-clique of $T$.
\end{theorem}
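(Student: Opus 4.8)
The plan is to reduce Problem~\ref{mainproblem} to a purely tree-theoretic statement about the local mean order of subtrees, and then to settle that statement by combining Theorem~\ref{Theorem 1.2} with a controlled study of the Kelmans operation. I would argue by contradiction: suppose that for some $k$-tree $T$ the maximum of $\mu(T;C)$ over all $k$-cliques $C$ is attained only at \emph{major} $k$-cliques, and fix such a maximiser $C^{\ast}$ with $\deg_{T}(C^{\ast})\ge 3$. The goal is then to exhibit another $k$-clique whose local mean order is at least as large but which is not major, contradicting the assumption.

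The first block of work is the passage from $T$ to its characteristic $1$-tree. Fixing a $k$-clique $C$, every sub-$k$-tree $S$ containing $C$ is built from $C$ by repeatedly attaching a new vertex to an already-present $k$-clique, so the $(k+1)$-cliques of $S$ organise themselves into a connected, tree-like structure hanging off $C$, and conversely every such structure recovers a unique $S$. I would record this as a bijection between the sub-$k$-trees of $T$ containing $C$ and the subtrees of a characteristic $1$-tree $\mathcal{T}$ that contain a distinguished vertex $v_{C}$, arranged so that the order of $S$ and the order of the corresponding subtree agree up to the fixed additive constant $k-1$ (each included $(k+1)$-clique contributes exactly one new vertex). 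Under such a correspondence $\mu(T;C)$ becomes an increasing affine function of the local subtree mean order at $v_{C}$, and, most importantly, the degree $\deg_{T}(C)$ is matched with the degree of $v_{C}$ in $\mathcal{T}$, so that $C$ is a major $k$-clique precisely when $v_{C}$ has degree at least $3$. Establishing this dictionary cleanly — in particular pinning down the correct $1$-tree so that "sub-$k$-trees containing $C$" translate to "subtrees containing the single vertex $v_{C}$" rather than "subtrees meeting a set of vertices", which is what a naive clique-tree construction produces when $\deg_{T}(C)\ge 3$ — is the technical heart of the reduction and is where a weighting of the vertices of $\mathcal{T}$ is likely to be forced upon us.

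With the reduction in hand, the remaining task is the tree statement: the vertex of $\mathcal{T}$ maximising the (weighted) local mean order is never a vertex of degree at least $3$. Here I would first prove a family of lemmas describing how the Kelmans operation changes the local mean order of a tree at a fixed vertex, by tracking its effect on the local subtree polynomial $\phi_{\mathcal{T},v}(x)$; the aim is a monotonicity statement asserting that a suitable \emph{partial} Kelmans operation applied at a branch vertex does not decrease the relevant maximum. Applying such operations at $v_{C^{\ast}}$ repeatedly lowers its degree (or transfers the maximum to a vertex of smaller degree) without ever decreasing the maximal local mean order, driving the configuration toward one in which the maximiser has degree at most $2$; Theorem~\ref{Theorem 1.2} is then the base case that certifies the maximum sits at a leaf or a degree-$2$ vertex, which pulls back through the dictionary to a non-major $k$-clique and yields Theorem~\ref{MainTheorem}.

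The step I expect to be the main obstacle is precisely the monotonicity of the local mean order under the partial Kelmans operation. Local mean order is a ratio $\phi'_{\mathcal{T},v}(1)/\phi_{\mathcal{T},v}(1)$, so it is not additive and its sign of change is genuinely delicate; one must compare the two generating functions coefficient by coefficient (or via a quotient-difference argument) and show the inequality in the intended direction, and one must do so in the \emph{weighted} model coming from the $(k+1)$-clique counting rather than in the clean unweighted setting of Theorem~\ref{Theorem 1.2}. A secondary difficulty is ensuring that the sequence of partial Kelmans moves remains compatible with the correspondence — that at each stage the transformed weighted tree still encodes a legitimate comparison of local mean orders of some $k$-tree, or at least that the final inequality transfers back to the original $T$. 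Once these monotonicity estimates are secured, the contradiction closes and the affirmative answer follows.
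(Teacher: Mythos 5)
Your outline follows the same general route as the paper (characteristic $1$-tree plus a Kelmans-type monotonicity), but as it stands it has two genuine gaps and one conceptual slip. The slip first: the dictionary $\mu(T;C)=\mu(T'_C;C)+k-1$ attaches a \emph{different} characteristic tree $T'_C$ to each $k$-clique $C$; it does not convert the maximum of $\mu(T;C)$ over all $k$-cliques into the maximum of $\mu(\mathcal{T};w)$ over the vertices $w$ of a single tree $\mathcal{T}$. For $w\neq C$ the quantity $\mu(T'_C;w)$ has no sub-$k$-tree interpretation at all, so Theorem~\ref{Theorem 1.2} applied to one characteristic tree cannot serve as the ``base case'' that closes the argument. (Also, no weighting is forced on the vertices: each vertex of a sub-$k$-tree beyond $C$ corresponds to exactly one vertex of the corresponding subtree of $T'_C$ beyond the vertex $C$, which is why the additive constant $k-1$ is exact.) The correct tree-level input is the sharper local statement of Wagner and Wang (Lemma~\ref{treelemma}): at a vertex of degree at least $3$ some \emph{neighbour} has strictly larger local mean order.

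The two gaps are precisely the two lemmas that make the scheme work and that you only gesture at. First, you need the structural fact (Lemma~\ref{adlemma}) that for \emph{adjacent} $k$-cliques $C_1$ and $C_2$ the tree $T'_{C_2}$ is isomorphic to the tree obtained from $T'_{C_1}$ by a specific partial Kelmans operation from $c_2$ to the vertex $C_1$, moving exactly the branches through $U_T(Q)\setminus(N_T(C_1)\cup N_T(C_2))$, where $Q$ is the $(k+1)$-clique spanned by $V(C_1)\cup V(C_2)$. This is what resolves your ``compatibility'' worry: the single partial Kelmans move you need produces again a characteristic tree of the same $T$ with respect to a genuine adjacent clique, so the inequality transfers back to $T$. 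Second, you need the monotonicity $\mu(T';v)\ge\mu(T;v)$ when $T'$ arises from $T$ by a partial Kelmans operation from $v$ to an adjacent vertex $u$ (Theorem~\ref{patialtheorem}); you correctly identify this as the main obstacle but do not prove it, and it is not routine --- the paper derives it from the explicit formula (\ref{eq1}) together with Jamison's bound $\phi'_{S,w}(1)/(1+\phi_{S,w}(1))\le\phi_{S,w}(1)/2$. Once both are in place, no iteration and no contradiction are needed: for any major $C_1$, Lemma~\ref{treelemma} gives a neighbour $c_2$ of $C_1$ in $T'_{C_1}$ with $\mu(T'_{C_1};c_2)>\mu(T'_{C_1};C_1)$, and the two lemmas give $\mu(T'_{C_2};C_2)\ge\mu(T'_{C_1};c_2)$ for $C_2=(V(C_1)\setminus\{c_1\})\cup\{c_2\}$, whence $\mu(T;C_2)>\mu(T;C_1)$ and every major clique is strictly beaten by an adjacent one.
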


The rest of this paper is organized as follows. In Section 2, we make necessary preparations for Section 3, mainly obtaining a formula for the local mean order of a tree at a vertex. In Section 3, we compare the local mean orders of subtrees of two trees related by a Kelmans operation or a partial Kelmans operation. In section 4, we prove the main Theorem \ref{MainTheorem}. In Section 5, as by-products two known results obtained in \cite{Wagner2} and \cite{Jamison1} are derived and Theorem \ref{MainTheorem} are further discussed.

\section{Preliminaries}

\iffalse
For a vertex $v$ of a graph $G$, let $N_G(v)$ denote the set of neighbours of $v$ in $G$,
{\red and $N_G[v]=N_G(v)\cup \{v\}$.}
Let $u$ and $v$ be two vertices
{\red of $G$, and let $N_1=N_G(u)\setminus N_G[v]$ and  $N_2=N_G(v)\setminus N_G[u]$.
}

\begin{definition}
The Kelmans operation on $G$ from $v$ to $u$ is defined as follows: Replace the edge $vw$ by a new edge $uw$ for each vertex $w\in N_2$.
\end{definition}

\newcommand \Kel[2]
{
{}_{(#1\rightarrow #2)}
}

{\red Let $G\Kel{v}{u}$
denote the graph obtained from $G$
by applying
the Kelmans operation on $G$ from $v$ to $u$.}
%We will denote the obtained graph by $G'$.
It is not difficult to see that
{\red $G\Kel{v}{u}\cong G$} if and only if
{\red either} $N_1=\emptyset$ or $N_2=\emptyset$.
Furthermore, {\red the two graphs
$G\Kel{v}{u}$ and $G\Kel{u}{v}$
are always isomorphic. }
%obtained from $G$ moving edges from $v$ to $u$ and from $u$ to $v$ are isomorphic.

Let $W\subseteq N_2$. If we only replace the edge $vw$ by a new edge $uw$ for each $w\in W$, this operation will be called a \emph{partial Kelmans operation}. A partial Kelmans operation on a tree is shown in Figure\ref{fig1}, which will be further discussed in the next section.

\begin{figure}
\centering
\includegraphics[width=1\linewidth]{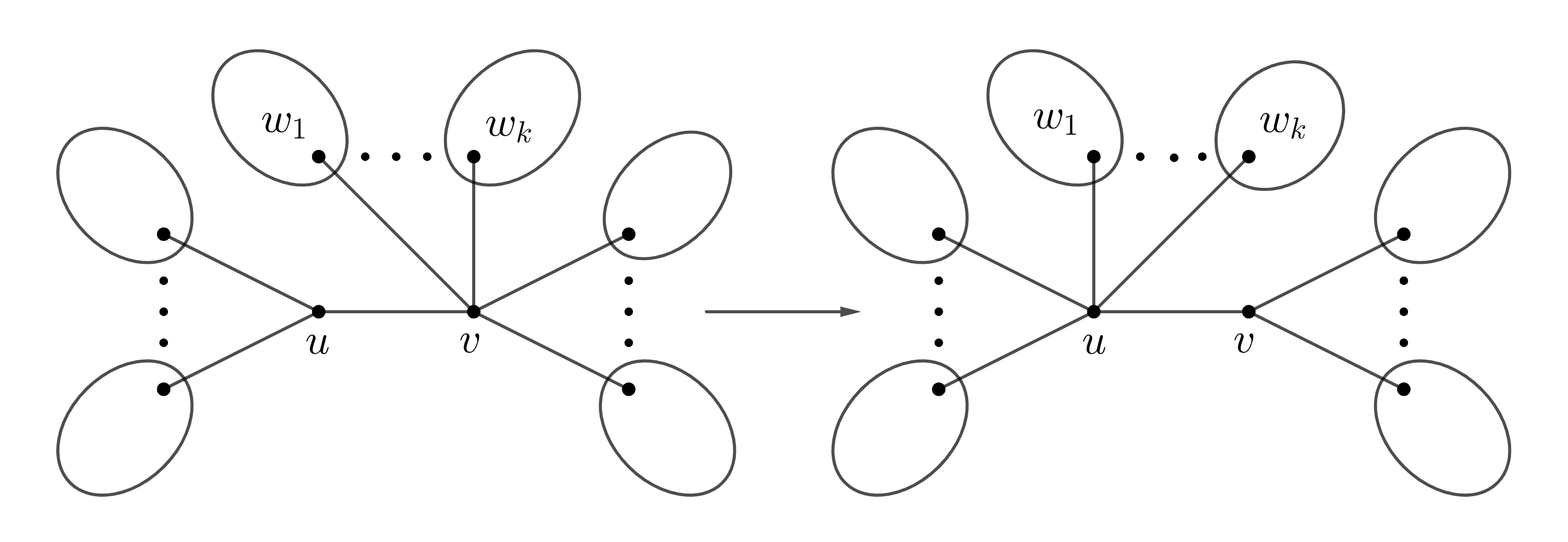}
\caption{A partial Kelmans operation on a tree from $v$ to $u$.}
\label{fig1}
\end{figure}
\fi

The following lemma in \cite{Jamison1}
follows immediately from the definition of the subtree polynomial.

\begin{lemma}[\cite{Jamison1}]\label{lemma2.0}
Let $T$ be a tree, $u$ be a vertex of $T$, and $T^*_w$ be the connected component of $T-u$ containing $w$. Then the subtree polynomial of $T$ at $u$
%\begin{align}\notag
\equ{eq2-0}
{
\phi_{T,u}(x)=x\prod_{w\in N_T(u)}(1+\phi_{T^*_w,w}(x)),
}
%\end{align}
where $N_T(u)$ is the set of neighbors of $u$ in $T$.
\end{lemma}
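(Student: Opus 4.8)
The plan is to prove the identity combinatorially, by interpreting both sides as generating functions that count subtrees of $T$ containing $u$ by their order, and exhibiting an order-preserving bijection. First I would record the basic structural fact that deleting $u$ disconnects $T$ into the components $\{T^*_w : w\in N_T(u)\}$, and that, because $T$ is a tree, each component $T^*_w$ contains exactly one neighbor $w$ of $u$; the edge $uw$ is the unique edge joining $u$ to $T^*_w$. Thus the neighbors of $u$ and the components of $T-u$ are in one-to-one correspondence.

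Next I would analyze the structure of an arbitrary subtree $S$ of $T$ with $u\in V(S)$. For each $w\in N_T(u)$ put $S_w = S\cap T^*_w$. Since $uw$ is the only edge from $u$ into $T^*_w$, a connected subgraph containing $u$ reaches into $T^*_w$ only through $w$; hence either $S_w$ is empty, or $S_w$ is a subtree of $T^*_w$ containing $w$. Conversely, given for each $w$ a choice of $S_w$ that is either empty or a subtree of $T^*_w$ containing $w$, the graph obtained as the union of $\{u\}$, the $S_w$, and the edges $uw$ for those $w$ with $S_w\neq\emptyset$ is connected (everything hangs off $u$) and, being a subgraph of the tree $T$, is automatically acyclic, so it is a subtree of $T$ containing $u$. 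The map $S\mapsto (S_w)_{w\in N_T(u)}$ is therefore a bijection between subtrees of $T$ containing $u$ and admissible tuples. The one point that genuinely needs care is the well-definedness of this bijection, namely that every admissible tuple reassembles into a single subtree and that distinct subtrees yield distinct tuples; this is exactly where the acyclicity of $T$ (the absence of any connection between distinct components other than through $u$) is used.

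Finally I would translate the bijection into generating functions. Under the correspondence, $V(S)=\{u\}\sqcup\bigsqcup_{w} V(S_w)$ is a disjoint union, so the order of $S$ equals $1+\sum_{w}|V(S_w)|$, with $|V(S_w)|=0$ when $S_w$ is empty. The vertex $u$ contributes the factor $x$; for a fixed neighbor $w$, the empty choice contributes the monomial $1$ while a nonempty $S_w$ contributes $x^{|V(S_w)|}$, so summing over all admissible choices at $w$ gives the factor $1+\phi_{T^*_w,w}(x)$. Because the choices in different components are independent and their orders add, the product rule for generating functions gives
\[
\phi_{T,u}(x)=x\prod_{w\in N_T(u)}\bigl(1+\phi_{T^*_w,w}(x)\bigr),
\]
as claimed. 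I expect no serious obstacle here: the result really does follow immediately once the bijection is set up, and the only subtle step is verifying that the bijection is well defined.
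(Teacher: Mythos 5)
Your argument is correct: the decomposition of a subtree containing $u$ into its (possibly empty) intersections with the components of $T-u$, together with the product rule for generating functions, is exactly the standard justification of this identity. The paper itself gives no proof --- it cites Jamison and remarks that the formula ``follows immediately from the definition of the subtree polynomial'' --- and your bijection is precisely the argument being alluded to, so there is nothing to add.
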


Let $u$ and $v$ be two adjacent vertices in a tree $T$,
and let $N_T(u,v)=(N_T(u)\cup N_T(v))\setminus\{u,v\}$.
Since $N_T(u)\cap N_T(v)=\emptyset$, $N_T(u,v)$ is the disjoint union
of $N_T(u)\setminus \{v\}$
and $N_T(v)\setminus \{u\}$.
For $w\in N_T(u,v)$, let $T_{w}$ denote the connected component containing $w$ in $T-\{u,v\}$.
Obviously,  for any
$w\in N_T(u)\setminus \{v\}$,
$T_w$ and $T^*_w$ are the same subtree
of $T$.
Applying Lemma \ref{lemma2.0},
we obtain the following expression
for  $\phi_{T,u}(x)$:
%to obtain an expression of the local mean order of subtrees of $T$ at $u$.
\eqn{eq2-1}
{
	\phi_{T,u}(x)&=&x(1+\phi_{T^*_v,v}(x))\prod_{w\in N_T(u)\setminus \{v\}}(1+\phi_{T^*_w,w}(x))
	\nonumber 	\\
	&=&x(1+x\prod_{w\in N_T(v)\setminus \{u\}}(1+\phi_{T_w,w}(x)))\prod_{w\in N_T(u)\setminus \{v\}}(1+\phi_{T_w,w}(x))\notag\\
	&=&x\prod_{w\in N_T(u)\setminus \{v\}}(1+\phi_{T_w,w}(x))+x^2\prod_{w\in N_T(u,v)}(1+\phi_{T_w,w}(x)).
}

Thus, its derivative
%\begin{align}\notag
\eqn{eq2-2}
{
\phi'_{T,u}(x)
&=&(1+x\sum_{w\in N_T(u)\setminus \{v\}}{{\phi'_{T_w,w}(x)}\over{1+\phi_{T_w,w}(x)}})\prod_{w\in N_T(u)\setminus \{v\}}(1+\phi_{T_w,w}(x))\notag\\
& &+(2x+x^2\sum_{w\in N_T(u,v)}{{\phi'_{T_w,w}(x)}\over{1+\phi_{T_w,w}(x)}})\prod_{w\in N_T(u,v)}(1+\phi_{T_w,w}(x)).
}
Hence,
\equ{eq2-3}
{
\phi_{T,u}(1)\!=\!\prod_{w\in N_T(u)\setminus \{v\}}\!(1+\phi_{{T_w},w}(1))\!
+\!\prod_{w\in N_T(u,v)}\!
(1+\phi_{{T_w},w}(1)),
}
%\end{align*}
and
%\begin{align}\notag
\eqn{eq2-4}
{
\phi'_{T,u}(1)&=&\left (1+\sum_{w\in N_T(u)\setminus \{v\}}{{\phi'_{T_w,w}(1)}
	\over{1+\phi_{T_w,w}(1)}}\right )
\prod_{w\in N_T(u)\setminus \{v\}}(1+\phi_{T_w,w}(1))\notag\\
& &+\left (2+\sum_{w\in N_T(u,v)}{{\phi'_{T_w,w}(1)}\over{1+\phi_{T_w,w}(1)}}\right )
\prod_{w\in N_T(u,v)}(1+\phi_{T_w,w}(1)).
}
%\end{align}
By (\ref{eq2-3}), (\ref{eq2-4}) and the definition of $\mu(T; u)$,
we have
\begin{align}\notag\label{eq1}
\mu(T;u)=&\frac{\phi'_{T,u}(1)}{\phi_{T,u}(1)}\\\notag
=&\dfrac{1\!+\!\sum\limits_{w\in N_T(u)\setminus \{v\}}\frac{\phi'_{T_w,w}(1)}{1+\phi_{T_w,w}(1)}}{1+\prod\limits_{w\in N_T(v)\setminus \{u\}}\!(1+\phi_{{T_w},w}(1))}\\
&\!+\!\dfrac{(2+\!\sum\limits_{w\in N_T(u,v)}\!\frac{\phi'_{T_w,w}(1)}{1+\phi_{T_w,w}(1)})\prod\limits_{w\in N_T(v)\setminus \{u\}}\!(1+\phi_{{T_w},w}(1))\!}{1+\prod\limits_{w\in N_T(v)\setminus \{u\}}(1+\phi_{{T_w},w}(1))}.
\end{align}

In the next section, we will apply a relation between $\alpha_i=\phi_{T_{u_i},u_i}(1)$ and $\alpha'_i=\phi'_{T_{u_i},u_i}(1)$ due to Jamison \cite{Jamison1}, as stated below. See also \cite{Wagner2}.

\begin{lemma}[\cite{Jamison1}]
	\label{lemma2.1}
	For any tree $T$ and any vertex $u$ of $T$, we have
	\begin{align}
		\notag {\phi'_{T,u}(1)\over 1+\phi_{T,u}(1)}\leq\frac{\phi_{T,u}(1)}{2},
	\end{align}
	with equality if and only if $T$ is a path and $u$ is a leaf of $T$ (which includes the case that $T$ is trivial, i.e., $T=u$).
\end{lemma}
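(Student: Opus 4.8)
The plan is to prove the inequality by induction on the order of $T$, using the product formula of Lemma \ref{lemma2.0}. Writing $N=\phi_{T,u}(1)$ and $M=\phi'_{T,u}(1)$, I first observe that the asserted bound $\frac{\phi'_{T,u}(1)}{1+\phi_{T,u}(1)}\le\frac{\phi_{T,u}(1)}{2}$ is equivalent, after clearing the positive denominator $2(1+N)$, to the cleaner inequality $2M\le N+N^2$. Proving this form, together with its equality condition, is the goal.

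For the base case $T=K_1$ (so $T=u$), the only subtree containing $u$ is $u$ itself, so $\phi_{T,u}(x)=x$, giving $N=M=1$; both sides equal $\tfrac12$, matching the trivial path case where equality is claimed. For the inductive step, let $N_T(u)=\{w_1,\dots,w_d\}$ and set $p_i=1+\phi_{T^*_{w_i},w_i}(1)$ and $q_i=\phi'_{T^*_{w_i},w_i}(1)$. Lemma \ref{lemma2.0} gives $\phi_{T,u}(x)=x\prod_{i=1}^d(1+\phi_{T^*_{w_i},w_i}(x))$, and differentiating at $x=1$ yields $N=\prod_{i=1}^d p_i$ and $M=N\bigl(1+\sum_{i=1}^d\tfrac{q_i}{p_i}\bigr)$. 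Since $\tfrac{q_i}{p_i}$ is exactly the left-hand quantity of the lemma for the pair $(T^*_{w_i},w_i)$, the induction hypothesis gives $\tfrac{q_i}{p_i}\le\tfrac{p_i-1}{2}$. Substituting and dividing the target $2M\le N+N^2$ by $N>0$, the whole claim reduces to the elementary inequality $1+\sum_{i=1}^d(p_i-1)\le\prod_{i=1}^d p_i$, which holds because each $p_i\ge2$ and expanding the product produces only nonnegative cross terms.

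I expect the equality analysis to be the delicate part. Equality forces two tightness conditions simultaneously: equality in every application of the induction hypothesis, so that each $T^*_{w_i}$ is a path with $w_i$ a leaf; and equality in the product inequality. Because each $p_i-1\ge1$, any two distinct branches contribute a strictly positive cross term $(p_i-1)(p_j-1)\ge1$, so the product inequality is tight only when $d\le1$. If $d=0$ then $T=u$, and if $d=1$ then $T$ is the path $T^*_{w_1}$ with the extra vertex $u$ attached at its leaf $w_1$, hence again a path with $u$ a leaf; conversely a path with $u$ a leaf clearly satisfies both conditions. The main obstacle is thus not the analytic inequality, which is routine, but the careful bookkeeping showing that the two equality conditions jointly force the path structure with $u$ at an end.
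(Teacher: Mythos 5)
Your proposal is correct. Note first that the paper itself offers no proof of Lemma \ref{lemma2.1}: it is quoted from Jamison's paper \cite{Jamison1} (see also \cite{Wagner2}) and used as a black box, so there is no in-paper argument to compare against. Your induction is a clean, self-contained derivation from the paper's Lemma \ref{lemma2.0}: the reduction of the inequality to $2M\le N+N^2$, the identities $N=\prod_i p_i$ and $M=N\bigl(1+\sum_i q_i/p_i\bigr)$, the application of the inductive hypothesis branchwise, and the final reduction to $1+\sum_i(p_i-1)\le\prod_i p_i$ are all correct, as is the key observation that $p_i\ge 2$ for every branch (the single vertex $w_i$ is itself a subtree), which is what makes the cross terms strictly positive when $d\ge 2$ and forces $d\le 1$ in the equality case. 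The equality analysis is complete in both directions. The one stylistic remark worth making is that this statement also admits a short non-inductive double-counting proof (comparing the number of pairs $(S',S)$ of subtrees containing $u$ with $S'\subseteq S$ against $N(N+1)/2$, with equality forcing the family of such subtrees to be a chain), which is closer in spirit to Jamison's original treatment; your inductive route has the advantage of using only machinery already set up in Section 2 of this paper.
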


\section{Local mean order under Kelmans operation}

In this section, we study the change of local mean order of subtrees of a tree at a vertex under a Kelmans operation or a partial Kelmans operation.

For a vertex $v$ of a graph $G$, recall that $N_G(v)$ denote the set of neighbours of $v$ in $G$,
Let $N_G[v]=N_G(v)\cup \{v\}$.
For two vertices $u$ and $v$
of $G$,  let $N_1=N_G(u)\setminus N_G[v]$ and  $N_2=N_G(v)\setminus N_G[u]$.

\begin{definition}
	The Kelmans operation on $G$ from $v$ to $u$ is defined to be the operation which replaces the edge $vw$ by a new edge $uw$ for each vertex $w\in N_2$.
\end{definition}

\newcommand \Kel[2]
{
	{}_{(#1\rightarrow #2)}
}

Let $G\Kel{v}{u}$ denote the graph obtained by applying the Kelmans operation on $G$ from $v$ to $u$. It is not difficult to see that
$G\Kel{v}{u}\cong G$ if and only if
 either $N_1=\emptyset$ or $N_2=\emptyset$.
Furthermore, the two graphs
	$G\Kel{v}{u}$ and $G\Kel{u}{v}$
	are always isomorphic.

Let $W\subseteq N_2$. If we only replace the edge $vw$ by a new edge $uw$ for each $w\in W$, this operation will be called a \emph{partial Kelmans operation}. A partial Kelmans operation on a tree $T$ from $v$ to one of its adjacent vertices $u$ is shown in Figure \ref{fig1}.

\begin{figure}[!ht]
	\centering
	\includegraphics[width=0.8\linewidth]{}
	\caption{A partial Kelmans operation on a tree from $v$ to $u$.}
	\label{fig1}
\end{figure}

\begin{theorem}\label{mainKO}
Let $u$ and $v$ be two adjacent vertices in a tree $T$. Then
\begin{align}\label{eq2}
\mu(T\Kel{v}{u};v)\geq\mu(T;u),
\end{align}
and
\begin{align}\label{eq3}
\mu(T;v)\geq  \mu(T\Kel{v}{u};u).
\end{align}
Moreover, equality holds in (\ref{eq2}) if and only if either $u$ is a leaf or
$T$ is a path with $v$ as a leaf, and equality holds in (\ref{eq3}) if and only if the component containing $u$ in $T-v$ is a path with $u$ as its leaf (which includes the case that this component has only one vertex $u$).
\end{theorem}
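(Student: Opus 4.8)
The plan is to express both inequalities through the explicit formula for $\mu(T;u)$ derived in the previous section and reduce everything to the structural comparison of two telescoping quantities. Let me write $\alpha_w=\phi_{T_w,w}(1)$ and $\alpha'_w=\phi'_{T_w,w}(1)$ for $w\in N_T(u,v)$, and observe that the Kelmans operation from $v$ to $u$ moves all the branches hanging at $v$ (the components $T_w$ for $w\in N_T(v)\setminus\{u\}$) so that after the operation they hang at $u$ instead. Crucially, each branch $T_w$ and the quantities $\alpha_w,\alpha'_w$ are \emph{unchanged} by the operation: only which of the two special vertices they are attached to changes. This means that formula (\ref{eq1}) for $\mu(T;u)$ and the corresponding formula for $\mu(T\Kel{v}{u};v)$ are built from exactly the same pool of branch data, with the roles of the two vertex-neighborhoods swapped.

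First I would set up notation cleanly: let
\begin{align}\notag
A=\prod_{w\in N_T(u)\setminus\{v\}}(1+\alpha_w),\qquad B=\prod_{w\in N_T(v)\setminus\{u\}}(1+\alpha_w),
\end{align}
and let $P=\sum_{w\in N_T(u)\setminus\{v\}}\frac{\alpha'_w}{1+\alpha_w}$ and $Q=\sum_{w\in N_T(v)\setminus\{u\}}\frac{\alpha'_w}{1+\alpha_w}$, so that by (\ref{eq1}) we have
\begin{align}\notag
\mu(T;u)=\frac{1+P}{1+B}+\frac{(2+P+Q)\,B}{1+B}.
\end{align}
The key point is that after performing the Kelmans operation from $v$ to $u$, the vertex $v$ becomes a leaf adjacent only to $u$, and \emph{all} other branches are attached at $u$; so the analogous formula for $\mu(T\Kel{v}{u};v)$ is obtained from the same expression but with the $A$/$P$ data (the original $u$-branches) and the $B$/$Q$ data (the original $v$-branches) interchanged in the appropriate places. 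Writing both means over a common denominator, inequality (\ref{eq2}) becomes a polynomial inequality in $A,B,P,Q$. I expect it to reduce, after clearing denominators and cancelling the common symmetric terms, to something of the shape $(B-A)\big(\text{positive combination}\big)\ge 0$ together with a term controlled by comparing $P$ and $Q$ against $A$ and $B$ via Lemma \ref{lemma2.1}.

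This is exactly where Lemma \ref{lemma2.1} does the heavy lifting. That lemma gives, for each branch, the bound $\frac{\alpha'_w}{1+\alpha_w}\le \frac{\alpha_w}{2}$ with equality characterized by paths with $w$ a leaf; summing and combining across a neighborhood yields an inequality relating the ``derivative sums'' $P,Q$ to the ``size products'' $A,B$. The strategy is to use these bounds to show that the sign of $\mu(T\Kel{v}{u};v)-\mu(T;u)$ is forced to be nonnegative, and then to prove (\ref{eq3}) by the completely symmetric argument (indeed $\mu(T;v)$ and $\mu(T\Kel{v}{u};u)$ are related to $\mu(T\Kel{v}{u};v)$ and $\mu(T;u)$ by swapping the roles of $u$ and $v$, using that $T\Kel{v}{u}$ and $T\Kel{u}{v}$ are isomorphic). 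For the equality analysis I would track the equality cases of Lemma \ref{lemma2.1} through the reduction: equality in (\ref{eq2}) should force either $A=1$ (i.e.\ $u$ has no branches other than $v$, so $u$ is a leaf) or every relevant branch to be a path attached at a leaf with $v$ itself a leaf, which is the claimed ``$T$ is a path with $v$ as a leaf'' case; similarly equality in (\ref{eq3}) should force the component of $u$ in $T-v$ to be a path with $u$ a leaf.

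The main obstacle I anticipate is the bookkeeping in the polynomial reduction after clearing denominators: the two means share the common denominator factor $1+B$ but their numerators mix $P$, $Q$, $A$ and $B$ in a way that does not visibly factor, so I expect to need a careful algebraic identity — likely rewriting the difference as $\frac{B-1}{1+B}\cdot(\text{something})$ plus a Lemma~\ref{lemma2.1}-controlled remainder — before the sign becomes transparent. The delicate part is ensuring the equality characterization is tight, i.e.\ that I have not lost any equality cases or introduced spurious ones when applying the branch-wise bound, which requires matching the equality conditions of Lemma \ref{lemma2.1} term by term rather than only after summation.
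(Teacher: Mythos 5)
Your approach to (\ref{eq2}) is essentially the paper's: express $\mu(T;u)$ and $\mu(T\Kel{v}{u};v)$ via (\ref{eq1}) in terms of the branch data at $u$ and $v$, take the difference, and control the result with Lemma \ref{lemma2.1}. One caveat on the anticipated shape: the difference does not factor through $B-A$; in the paper's notation it equals $\frac{\beta\gamma}{(1+\alpha\beta)(1+\beta)}$ with $\gamma=(\alpha-1)\bigl(1+\sum_{i}\beta_i'/(1+\beta_i)\bigr)-(1+1/\beta)\sum_{i}\alpha_i'/(1+\alpha_i)$, and positivity comes from $\sum_{i}\alpha_i'/(1+\alpha_i)\le\frac{1}{2}\sum_{i}\alpha_i\le\frac{1}{2}(\alpha-1)$ together with $1+1/\beta\le 2$. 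So Lemma \ref{lemma2.1} is only needed on the $u$-side branches; the $v$-side enters only through $\alpha-1\ge 0$ and $\beta\ge 1$. This matters for the equality case, which forces $k\le 1$, $s=0$ and $T_{u_1}$ a path with $u_1$ as a leaf; your equality analysis would recover this once the identity is written down.

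The genuine gap is your plan for (\ref{eq3}). Swapping the roles of $u$ and $v$ in (\ref{eq2}) gives $\mu(T\Kel{u}{v};u)\ge\mu(T;v)$, and the isomorphism $T\Kel{u}{v}\cong T\Kel{v}{u}$ (which exchanges $u$ and $v$) turns this into $\mu(T\Kel{v}{u};v)\ge\mu(T;v)$ --- that is Corollary \ref{jin} of the paper, with $\mu(T;v)$ on the \emph{smaller} side, not the inequality $\mu(T;v)\ge\mu(T\Kel{v}{u};u)$ that you need. The two inequalities are genuinely different statements: (\ref{eq2}) says the local mean goes up at the vertex that becomes a leaf, while (\ref{eq3}) says it goes down at the absorbing vertex, and no symmetry of the construction maps one to the other. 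You must prove (\ref{eq3}) by a second direct computation; the paper does so, obtaining $\mu(T;v)-\mu(T\Kel{v}{u};u)=\frac{1}{1+\alpha}\left(\frac{\alpha-1}{2}-\sum_{i}\frac{\alpha_i'}{1+\alpha_i}\right)$, which is nonnegative by Lemma \ref{lemma2.1} together with $\alpha\ge 1+\sum_{i}\alpha_i$, and which also yields the stated equality condition (the component of $u$ in $T-v$ is a path with $u$ as a leaf).
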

\begin{proof}
Suppose that
\[N_T(u)\setminus \{v\}=\{u_1,u_2,\cdots,u_k\}
\quad \mbox{and}\quad
N_T(v)\setminus \{u\}=\{v_1,v_2,\cdots,v_s\}.\]
Note that either $N_T(u)\setminus \{v\}$ or $N_T(v)\setminus \{u\}$ may be empty, namely, $k=0$ or $s=0$. Then $N_T(u,v)=\{u_1,u_2,\cdots,u_k,v_1,v_2,\cdots,v_s\}$. Recall that for each $w\in N_T(u,v)$, $T_w$ is the connected component of $T-\{u,v\}$ containing $w$. For simplicity, we set $\alpha_i=\phi_{T_{u_i},u_i}(1)$, $\beta_i=\phi_{T_{v_i},v_i}(1)$, $\alpha_i'=\phi'_{T_{u_i},u_i}(1)$,
$\beta_i'=\phi'_{T_{v_i},v_i}(1)$, and let
\equ{eq3-1}
{
\alpha=\prod_{i=1}^{k}(1+\alpha_i)
\quad \mbox{and}\quad
 \beta=\prod_{i=1}^{s}(1+\beta_i).
}
Obviously,
$\alpha\ge 1$ (resp. $\beta\ge 1$)
with equality if and only if $k=0$
(resp. $s=0$).
Furthermore,
$\alpha \ge 1
+\sum\limits_{i=1}^k \alpha_i$,
where the equality holds if and only if
$k\le 1$.
Analogous property also holds for $\beta$.

By (\ref{eq1}) and (\ref{eq3-1}),
we have
\begin{align}\label{eq5}
\mu(T;u)=\dfrac{1+\sum_{i=1}^{k}\frac{\alpha'_i}{1+\alpha_i}+2\beta+\beta\left(\sum_{i=1}^{k}\frac{\alpha'_i}{1+\alpha_i}+\sum_{i=1}^{s}\frac{\beta'_i}{1+\beta_i}\right)}{1+\beta}.
\end{align}
and
\begin{align}\label{eq6}
\mu(T;v)=\dfrac{1+\sum_{i=1}^{s}\frac{\beta'_i}{1+\beta_i}+2\alpha+\alpha\left(\sum_{i=1}^{k}\frac{\alpha'_i}{1+\alpha_i}+\sum_{i=1}^{s}\frac{\beta'_i}{1+\beta_i}\right)}{1+\alpha}.
\end{align}

Note that $T\Kel{v}{u}$ is obtained by the Kelmans operation on $T$ from $v$ to $u$. We have
\equ{eq3-2}
{
N_{T\Kel{v}{u}}(u)\setminus \{v\}
=\{u_i:1\le i\le k\}\cup \{v_j: 1\le j\le s\}\
\mbox{and}
\
N_{T\Kel{v}{u}}(v)\setminus \{u\}=\emptyset.
}

Similarly, by (\ref{eq1}) and (\ref{eq3-1}),
we have
\begin{align}\label{eq7}
\mu(T\Kel{v}{u};v)=\frac{1+2\alpha\beta+\alpha\beta\left(\sum_{i=1}^{k}\frac{\alpha'_i}{1+\alpha_i}+\sum_{i=1}^{s}\frac{\beta'_i}{1+\beta_i}\right)}{1+\alpha\beta},
\end{align}
and
\begin{align}\label{eq8}
\mu(T\Kel{v}{u};u)=\frac{3}{2}+\sum_{i=1}^{k}\frac{\alpha'_i}{1+\alpha_i}+\sum_{i=1}^{s}\frac{\beta'_i}{1+\beta_i}.
\end{align}
We first prove (\ref{eq2}). By (\ref{eq5}) and (\ref{eq7}), after a simple calculation, we have
%\begin{align}\notag
\eqn{eq3-3}
{
& &\mu(T\Kel{v}{u};v)-\mu(T;u)\notag\\
&=&\frac{\alpha\beta+\alpha\beta\sum_{i=1}^{s}\frac{\beta'_i}{1+\beta_i}-\sum_{i=1}^{k}\frac{\alpha'_i}{1+\alpha_i}-\beta-\beta\left(\sum_{i=1}^{k}\frac{\alpha'_i}{1+\alpha_i}+\sum_{i=1}^{s}\frac{\beta'_i}{1+\beta_i}\right)}{(1+\alpha\beta)(1+\beta)}
\notag \\
&=&\frac{\beta \gamma}
{(1+\alpha\beta)(1+\beta)},
}
%\end{align}
where
%\begin{align}\notag
\eqn{eq3-4}
{
\gamma&=&\alpha+\alpha\sum_{i=1}^{s}\frac{\beta'_i}{1+\beta_i}-\frac{1}{\beta}\sum_{i=1}^{k}\frac{\alpha'_i}{1+\alpha_i}-1-\sum_{i=1}^{k}\frac{\alpha'_i}{1+\alpha_i}-\sum_{i=1}^{s}\frac{\beta'_i}{1+\beta_i}
\notag\\
&=&\alpha-1+(\alpha-1)\sum_{i=1}^{s}\frac{\beta'_i}{1+\beta_i}-(1+{1\over \beta})\sum_{i=1}^{k}{\alpha'_i\over 1+\alpha_i}.
}
%\end{align}
Observe that
	$\mu(T\Kel{v}{u};v)=\mu(T;u)$ if and only if $\gamma=0$.
	If $k=0$ (i.e., $u$ is a leaf), then $\alpha=1$ and $\gamma=0$ by (\ref{eq3-4}).
Now assume that $k\ge 1$.
By Lemma \ref{lemma2.1},
for each $i=1,2,\cdots, k$, we have
\begin{align}\label{eq4}
	\frac{\alpha'_i}{1+\alpha_i}\leq \frac{\alpha_i}{2},
\end{align}
with equality if and only if $T_{u_i}$ is a path with $u_i$ its leaf.
By (\ref{eq4}), we have

\eqn{eq3-5}
{
\gamma
&\geq & \alpha-1+(\alpha-1)\sum_{i=1}^{s}\frac{\beta'_i}{1+\beta_i}-\left(\frac{1}{2}+\frac{1}{2\beta}\right)\sum_{i=1}^{k}\alpha_i
\\
&\ge &
\alpha-1-\sum_{i=1}^{k}\alpha_i
\label{eq3-6}
\\
&\ge &0,  \label{eq3-7}
}

where the equality of (\ref{eq3-5})
holds if and only if $T_{u_i}$ is a path with $u_i$ as its leaf for each $i=1,2,\cdots,k$,
the equality of (\ref{eq3-6})
holds if and only if $s=0$,
and
the equality of (\ref{eq3-7})
holds if and only if $k=1$.

Observe that
$s=0$, $k=1$
and $T_{u_1}$ is a path with $u_1$
as a leaf
if and only if  $T$ is  a path with $v$ as a leaf.

Now we are going to prove  (\ref{eq3}).
By (\ref{eq6}) and (\ref{eq8}), we have

\eqn{eq3-8}
{
\mu(T;v)-\mu(T\Kel{v}{u};u)
&=&\frac{\alpha+\alpha\sum_{i=1}^{k}\frac{\alpha'_i}{1+\alpha_i}}{1+\alpha}-\frac{1}{2}-\sum_{i=1}^{k}\frac{\alpha'_i}{1+\alpha_i}\notag\\
&=&\frac{1}{1+\alpha}\left(
\frac{1}{2}\alpha-\frac 12 -
\sum_{i=1}^{k}\frac{\alpha'_i}{1+\alpha_i}
\right)
\notag\\
&\ge &
\frac{1}{1+\alpha}
\left(\frac{1}{2}\alpha
-\frac 12
-\sum_{i=1}^{k}\frac{\alpha_i}{2}\right)
\\
&\ge &0,
\label{eq3-9}
}
where the inequality of (\ref{eq3-8})
follows from (\ref{eq4}),
and the inequality of (\ref{eq3-9})
follows from the fact that
$\alpha\ge 1
+\sum\limits_{i=1}^k \alpha_i$.

Thus (\ref{eq3}) holds with equality if and only if $k=0$ or $k=1$ and $T_{u_1}$ is a path with $u_1$ its leaf, which means the component containing $u$ in $T-v$ is a path with $u$ as its leaf.
\end{proof}

\begin{corollary}\label{jin}
Let $u$ and $v$ be two adjacent vertices of a tree $T$. Then

\equ{eq3-10}
{
\mu(T\Kel{v}{u};v)\geq\mu(T;v),	
}
with equality if and only if
either $v$ is a leaf of $T$
or
$T$ is a path with $u$ as a leaf.
\end{corollary}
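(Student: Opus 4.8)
The plan is to obtain (\ref{eq3-10}) directly from inequality (\ref{eq2}) of Theorem \ref{mainKO} by exploiting the symmetry between the two adjacent vertices $u$ and $v$, rather than recomputing everything from (\ref{eq6}) and (\ref{eq7}). The crucial observation is that $T\Kel{v}{u}$, in which $v$ has been turned into a leaf adjacent to $u$, is isomorphic to $T\Kel{u}{v}$, in which $u$ has been turned into a leaf adjacent to $v$: the bijection exchanging $u$ and $v$ and fixing every component $T_w$ with $w\in N_T(u,v)$ is such an isomorphism, and it sends the marked vertex $v$ of $T\Kel{v}{u}$ to the marked vertex $u$ of $T\Kel{u}{v}$. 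Hence $\phi_{T\Kel{v}{u},v}(x)=\phi_{T\Kel{u}{v},u}(x)$ and in particular
\[\mu(T\Kel{v}{u};v)=\mu(T\Kel{u}{v};u).\]
(The same identity can be read off by interchanging $\alpha\leftrightarrow\beta$ in (\ref{eq7}), under which that formula is invariant.)

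First I would apply (\ref{eq2}) with the roles of $u$ and $v$ interchanged, which is legitimate because the hypothesis of Theorem \ref{mainKO} treats $u$ and $v$ symmetrically. The interchanged inequality is $\mu(T\Kel{u}{v};u)\ge\mu(T;v)$, and combining it with the displayed identity gives $\mu(T\Kel{v}{u};v)=\mu(T\Kel{u}{v};u)\ge\mu(T;v)$, which is (\ref{eq3-10}). The equality analysis then comes for free: interchanging $u$ and $v$ in the equality condition of (\ref{eq2}), namely ``either $u$ is a leaf or $T$ is a path with $v$ as a leaf,'' yields exactly ``either $v$ is a leaf or $T$ is a path with $u$ as a leaf,'' which is the asserted condition.

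The one step needing care is the isomorphism: I must verify not merely that $T\Kel{v}{u}$ and $T\Kel{u}{v}$ are abstractly isomorphic, but that the isomorphism can be chosen to map the distinguished vertex $v$ to the distinguished vertex $u$, since $\mu(\cdot;\cdot)$ depends on the marked vertex. If one prefers to sidestep this, the brute-force alternative is to substitute (\ref{eq6}) and (\ref{eq7}) into $\mu(T\Kel{v}{u};v)-\mu(T;v)$, clear the positive denominator $(1+\alpha)(1+\alpha\beta)$, and check the numerator is nonnegative using $\alpha\ge 1$ together with $\beta-1\ge\sum_{i=1}^{s}\beta_i\ge 2\sum_{i=1}^{s}\frac{\beta'_i}{1+\beta_i}$, the latter following from $\beta\ge 1+\sum_{i=1}^s\beta_i$ and Lemma \ref{lemma2.1}; but the symmetry argument is shorter and exposes the equality cases more transparently.
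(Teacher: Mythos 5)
Your proposal is correct and is essentially identical to the paper's own proof: both apply (\ref{eq2}) with the roles of $u$ and $v$ interchanged to get $\mu(T\Kel{u}{v};u)\ge\mu(T;v)$, and both use the isomorphism $T\Kel{v}{u}\cong T\Kel{u}{v}$ that swaps $u$ and $v$ (hence maps the marked vertex $v$ to the marked vertex $u$) to conclude $\mu(T\Kel{v}{u};v)=\mu(T\Kel{u}{v};u)$. The equality analysis by interchanging $u$ and $v$ in the equality condition of (\ref{eq2}) also matches the paper exactly.
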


\begin{proof}
By (\ref{eq2}), we have $\mu(T\Kel{u}{v};u)\geq \mu(T;v)$ with equality if and only if
either $v$ is a leaf or
$T$ is a path with $u$ as a leaf.
Since $T\Kel{v}{u}\cong T\Kel{u}{v}$
with an isomorphism
from $T\Kel{v}{u}$ to $T\Kel{u}{v}$
mapping $v$ to $u$ and $u$ to $v$,
we have $\mu(T\Kel{v}{u};v)=
\mu(T\Kel{u}{v};u)$.
Thus, $\mu(T\Kel{v}{u};v)\geq \mu(T;v)$ with equality if and only if
either $v$ is a leaf or $T$ is a path with $u$ as a leaf.
\end{proof}

By generalizing Corollary \ref{jin} from Kelmans operation to partial Kelmans operation, we obtain the following theorem, which will be used to prove Theorem \ref{MainTheorem} in the next section.

\begin{theorem}\label{patialtheorem}
Let $u$ and $v$ be two adjacent vertices of a tree $T$, and let $T'$ be the tree obtained by a partial Kelmans operation on $T$ from $v$ to $u$.
Then
\begin{align}\notag
\mu(T';v)\geq\mu(T;v).
\end{align}
Moreover, the equality holds in the  inequality above if and only if either

\begin{enumerate}
\item[(a)] $T'=T$ (no operation is made), or
\item[(b)] $u$ is a leaf of $T$,
$T'$ is the tree obtained by replacing the edge $vv_1$ by $uv_1$,
where $v_1\in N_T(v)\setminus\{u\}$
and $T_{v_1}$ is a path with $v_1$ as a leaf.
\end{enumerate}
\end{theorem}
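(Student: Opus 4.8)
The plan is to reduce the partial Kelmans operation to a sequence of one-edge moves and apply the machinery already developed for the full Kelmans operation. Let $W=\{v_{i_1},\dots,v_{i_t}\}\subseteq N_T(v)\setminus\{u\}$ be the set of neighbors whose edges to $v$ are redirected to $u$. First I would express both $\mu(T';v)$ and $\mu(T;v)$ using formula \eqref{eq6}: in $T$ the product over the $v$-side is $\beta=\prod_{i=1}^s(1+\beta_i)$ and the product over the $u$-side is $\alpha=\prod_{i=1}^k(1+\alpha_i)$, whereas in $T'$ the subtrees $T_{v_{i_j}}$ for $v_{i_j}\in W$ have migrated from the $v$-side to the $u$-side. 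Setting $\beta_W=\prod_{v_{i_j}\in W}(1+\beta_{i_j})$, the new $v$-side product becomes $\beta/\beta_W$ and the new $u$-side product becomes $\alpha\,\beta_W$, while the summands $\frac{\beta'_{i_j}}{1+\beta_{i_j}}$ simply move between the two sums. Substituting these into \eqref{eq6} gives a closed-form expression for $\mu(T';v)-\mu(T;v)$.

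The key observation to organize the argument is that a partial Kelmans operation moving the single set $W$ at once has the same effect on $\mu(\cdot;v)$ as performing the moves one neighbor at a time, since redirecting edges is commutative and never alters the subtrees $T_w$ themselves, only their attachment point. So I would proceed by induction on $t=|W|$. The base case $t=0$ is trivial ($T'=T$). For the inductive step, it suffices to show that redirecting one additional edge $vv_j$ to $u$ does not decrease $\mu(\cdot;v)$, i.e. $\mu(T'';v)\ge\mu(T';v)$ where $T''$ is obtained from $T'$ by moving the one neighbor $v_j$. The cleanest way to obtain each single-edge inequality is to invoke Corollary \ref{jin}: after redirecting one edge, the vertex $v_j$ becomes a leaf attached to $u$ in the intermediate configuration, and $\mu(\cdot;v)$ increases under the Kelmans operation from $v_j$ toward $u$. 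However, a direct monotonicity inequality computed from the substituted form of \eqref{eq6} is likely to be more transparent than threading Corollary \ref{jin} through each step, so I would carry the explicit difference and show its numerator is nonnegative.

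For the explicit computation, writing $A=\sum_{i=1}^k\frac{\alpha'_i}{1+\alpha_i}$, $B=\sum_{i=1}^s\frac{\beta'_i}{1+\beta_i}$, $B_W=\sum_{v_{i_j}\in W}\frac{\beta'_{i_j}}{1+\beta_{i_j}}$, the quantity $A+B$ (the total of all summands) is invariant under the operation because no subtree is created or destroyed. Hence $\mu(T';v)$ and $\mu(T;v)$ share the combined sum $A+B$, and their difference reduces to comparing $\frac{1+(B-B_W)+2\alpha\beta_W+\alpha\beta_W(A+B)}{1+\alpha\beta_W}$ against $\frac{1+B+2\alpha+\alpha(A+B)}{1+\alpha}$. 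After clearing denominators the sign of the difference is governed by a factor of the form $(\beta_W-1)$ times a nonnegative combination, together with the gap $\beta_W-1-\sum_{v_{i_j}\in W}\beta_{i_j}\ge 0$ and the bound $\frac{\beta'_{i_j}}{1+\beta_{i_j}}\le\frac{\beta_{i_j}}{2}$ from Lemma \ref{lemma2.1}; these are exactly the ingredients used in the proof of Theorem \ref{mainKO}, so the inequality follows and one tracks when each bound is tight.

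The main obstacle I anticipate is the equality analysis rather than the inequality itself. Establishing $\mu(T';v)\ge\mu(T;v)$ is a routine (if lengthy) manipulation once the invariance of $A+B$ is exploited, but pinning down precisely that equality forces either $T'=T$ or case (b) requires carefully combining the three separate tightness conditions: the $\beta_W-1\ge 0$ bound is tight only when exactly one edge is moved and that single $\beta_{i_j}$-block is as small as possible, the convexity-type bound is tight only when each moved $T_{v_{i_j}}$ is a path with $v_{i_j}$ as a leaf, and the remaining factor vanishes only when $u$ is a leaf ($\alpha=1$, i.e. $k=0$). I would verify that these conditions hold simultaneously exactly in the two listed cases, taking care with the degenerate situations where $T_{v_1}$ is a single vertex, which is subsumed under the path condition. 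This equality bookkeeping, mirroring the corresponding discussion for equations \eqref{eq3-5}--\eqref{eq3-7} in Theorem \ref{mainKO}, is where the argument demands the most attention.
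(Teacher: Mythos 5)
Your primary route---substituting the migrated products into \eqref{eq6}, writing $\mu(T';v)-\mu(T;v)$ explicitly, and closing with Lemma \ref{lemma2.1} together with the bound $\beta_W-1-\sum_{v_{i_j}\in W}\beta_{i_j}\ge 0$ (tight iff $|W|=1$, with no condition on the size of that block)---is exactly the paper's proof, including the same three tightness conditions in the equality analysis. The alternative one-edge-at-a-time induction via Corollary \ref{jin} that you mention and then discard would not work as stated, since redirecting a single edge $vv_j$ is a partial, not a full, Kelmans operation and $v_j$ need not be a leaf; but as you correctly fall back on the direct computation, this does not affect the argument.
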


\begin{proof}
Let
\[N_T(u)\setminus \{v\}=\{u_1,u_2,\cdots,u_k\},\]
and
\[N_T(v)\setminus \{u\}=\{v_1,v_2,\cdots,v_s,w_1,w_2,\cdots,w_t\}.\]
Let $T'$ denote the tree
obtained by
the partial Kelmans operation on $T$ from $v$ to $u$, which replaces the edge $vv_i$ with the edge $uv_i$ for $i=1,2,\cdots,s$.
If $s=0$, then no operation is really made, $T'=T$, and hence $\mu(T';v)=\mu(T,v)$ holds.
Thus, we need only to consider the case
$s\ge 1$.

For simplicity, we set $\alpha_i=\phi_{T_{u_i},u_i}(1)$, $\beta_i=\phi_{T_{v_i},v_i}(1)$, $\gamma_i=\phi_{T_{w_i},w_i}(1)$, $\alpha_i'=\phi'_{T_{u_i},u_i}(1)$, $\beta_i'=\phi'_{T_{v_i},v_i}(1)$, $\gamma_i'=\phi'_{T_{w_i},w_i}(1)$ and
$$
\alpha=\prod_{i=1}^{k}(1+\alpha_i),\quad
\beta=\prod_{i=1}^{s}(1+\beta_i),\quad
\gamma=\prod_{i=1}^{t}(1+\gamma_i)
$$
and
$$
\delta=\sum_{i=1}^{k}\frac{\alpha_i'}{1+\alpha_i},\quad
\theta=\sum_{i=1}^{s}\frac{\beta_i'}{1+\beta_i},\quad
\omega=\sum_{i=1}^{t}\frac{\gamma_i'}{1+\gamma_i}.
$$
Then, by (\ref{eq1}), we have
%\begin{align}\notag
\equ{eq3-11}
{
\mu(T;v)=\dfrac{1+\theta+\omega+2\alpha+\alpha(\delta+\theta+\omega)}{1+\alpha}.
%\end{align}
}

%Let $T'$ be the tree obtained by the above partial Kelmans operation.
Note that
\[N_{T'}(u)\setminus \{v\}=\{u_1,u_2,\dots,u_k,v_1,v_2,\dots,v_s\}\]
and
\[N_{T'}(v)\setminus \{u\}=\{w_1,w_2,\dots,w_t\}.
\]
By (\ref{eq1}), we also have
%\begin{align}\notag
\equ{eq3-12}
{
\mu(T';v)=\dfrac{1+\omega+2\alpha\beta+\alpha\beta(\delta+\theta+\omega)}{1+\alpha\beta}.
}
%\end{align}
By a simple calculation, we have
%\begin{align}\notag
\eqn{eq3-13}
{
\mu(T';v)-\mu(T;v)
&=&\alpha\dfrac{\beta+(\beta-1)\delta-(\frac{1}{\alpha}+1)\theta-1}
{(1+\alpha)(1+\alpha\beta)}
\notag \\
&\geq &
 \frac{\alpha(\beta -2\theta-1)}
 {(1+\alpha)(1+\alpha\beta)},
}
%\end{align}
\iffalse
Note that
\begin{align*}
\beta+(\beta-1)\delta-(\frac{1}{\alpha}+1)\theta-1 \geq \beta - 2\theta-1,
\end{align*}
\fi
where the equality holds
if and only if
$\alpha=1$ and $(\beta-1)\delta=0$,
i.e., $k=0$.

By Lemma \ref{lemma2.1},
%we have  $\theta=\sum_{i=1}^{s}\frac{\beta_i'}{1+\beta_i}\leq \sum_{i=1}^{s}\frac{\beta_i}{2}$.Then
we have
%\begin{align*}
\equ{eq3-14}
{
\beta - 2\theta-1
=\beta - \sum_{i=1}^{s}\frac{2\beta_i'}{1+\beta_i}
-1
\geq \beta-\sum_{i=1}^{s}\beta_i-1,
}
%\end{align*}
where the  equality holds if and only if
$T_{v_i}$ is a path with leaf $v_i$ for each $1\leq i\leq s$.

It is easy to see $\beta-\sum_{i=1}^{s}\beta_i-1\geq0$,
where the equality holds
if and only if $s=1$.
By (\ref{eq3-13}) and (\ref{eq3-14}),
%The above arguments show that
we have $\mu(T';v)-\mu(T;v)\geq0$,
where the equality holds
if and only if
%$s=0$ which means $T'=T$, or
$k=0$, $s=1$ and $T_{v_1}$ is a path with $v_1$ as a leaf.
The conclusion holds.
\end{proof}

\section{Proof of main Theorem}

In this section, we prove Theorem \ref{MainTheorem} by using the partial Kelmans operation.
We first explain the characteristic $1$-tree to convert a $k$-tree to a $1$-tree. Note that any vertex in a $k$-tree of order $n\geq k+1$ has degree at least $k$. A vertex with degree $k$ in a $k$-tree $T$
of order $n\geq k+1$ is called a \emph{$k$-leaf} of $T$.
Thus a $1$-leaf is a leaf of a tree. The set of $k$-leaves of a $k$-tree $T$ will be denoted by $L(T)$.
A $k$-clique containing a
$k$-leaf in a $k$-tree is called a $simplicial$ $k$-clique.

\begin{definition} (path-type $k$-tree) Fix an integer $k\geq 1$.
\begin{description}
\item[(1)] The complete graphs $K_k$ and $K_{k+1}$ are path-type $k$-trees.
\item[(2)] If $P$ is a path-type $k$-tree of order at least $k+1$, then so is the graph obtained from $P$ by joining a new vertex to some simplicial $k$-clique of $P$.
\end{description}
\end{definition}
Note that a $k$-tree not $K_k$ or $K_{k+1}$ is a path-type $k$-tree if and only if it has exactly two $k$-leaves.

For each non-trivial $k$-tree $T$ with a given $k$-clique $C$, if the sequence $(v_1,v_2\cdots,v_p)$ of vertices in $T$ satisfies the following conditions:
\begin{description}
\item[(1)] $\{v_1,v_2\cdots,v_p\}\cup V(C)=V(T)$; and
\item[(2)] $v_i$ is a $k$-leaf in $T-\{v_j:1\le j\le i-1\}$ for
all $i$ with $1\le i\le p$,
\end{description}
then this sequence is called a \emph{perfect elimination ordering} of $T$ down to $C$.

The following result,
due to Stephens and Oellermann~\cite{Stephens},
states that there is a unique path-type $k$-tree starting at any given $k$-clique $C$ and ending at any vertex $v$ in $V(T)\setminus V(C)$.

\begin{lemma}[\cite{Stephens}]
	\label{uniq}
        Let $T$ be a $k$-tree with a $k$-clique $C$ and $v\in V(T)\setminus V(C)$. Then there exists a unique sequence $A_T(C,v)=(C,w_1,w_2,\cdots,w_s,v)$ with $w_i\in V(T)\setminus V(C)$ ($1\le i\le s$) such that
        \begin{description}
                \item[(1)] the graph induced by $V(C)\cup\{w_1,w_2,\cdots,w_s,v\}$, denoted by $P_T(C,v)$, is a path-type $k$-tree with $C$
                as a simplicial $k$-clique
                and $v$ as a $k$-leaf; and
                \item[(2)]  the sequence $(v,w_s,w_{s-1},\cdots,w_1)$ is a perfect elimination ordering of $P_T(C,v)$ down to $C$.
        \end{description}
\end{lemma}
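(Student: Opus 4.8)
The plan is to prove existence and uniqueness simultaneously by induction on $n=|V(T)|$. For the base case $n=k+1$ we have $T=K_{k+1}$ and $v$ is the unique vertex outside $V(C)$, so the graph $P_T(C,v)=T$ together with the trivial sequence $(C,v)$ (i.e.\ $s=0$) does the job, and it is clearly the only possibility. For the inductive step $n\ge k+2$, I would split into two cases according to whether $T$ has a \emph{free} $k$-leaf, by which I mean a $k$-leaf $x$ with $x\notin V(C)\cup\{v\}$.

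Suppose first that such a free $k$-leaf $x$ exists. Since $x$ is a $k$-leaf, $T-x$ is again a $k$-tree, now of order $n-1\ge k+1$, and it still contains $C$ and $v$ with $v\notin V(C)$; so the induction hypothesis furnishes a unique $A_{T-x}(C,v)$ and its path-type $k$-tree $P_{T-x}(C,v)$. This object is also a sub-$k$-tree of $T$, and the three required properties (being path-type, having $C$ as a simplicial $k$-clique, and admitting a perfect elimination ordering down to $C$) are intrinsic to it, so it witnesses existence in $T$. For uniqueness I would show that $x$ cannot appear in \emph{any} valid path-type tree $P=P_T(C,v)$ with sequence $(C,w_1,\dots,w_s,v)$: the two $k$-leaves of $P$ are $v$ and one vertex of $V(C)$, so since $x\notin V(C)\cup\{v\}$ the only way to have $x\in V(P)$ is $x=w_i$ for some interior index $i$ (which forces $s\ge 1$); but then $w_i$ is not a $k$-leaf of $P$, whence $\deg_P(x)\ge k+1$, contradicting $\deg_T(x)=k$ and $P\subseteq T$. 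Thus every valid $P$ lives inside $T-x$, and inductive uniqueness there forces $P=P_{T-x}(C,v)$.

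In the remaining case every $k$-leaf of $T$ lies in $V(C)\cup\{v\}$, and here I would pin down $T$ completely. Two structural facts are needed. First, a $k$-tree of order $n\ge k+2$ is non-complete and chordal, hence has at least two simplicial vertices, which are exactly its $k$-leaves. Second, no two $k$-leaves are adjacent once $n\ge k+2$: if $a,b$ were adjacent $k$-leaves, then deleting the simplicial vertex $a$ would leave $b$ with degree $k-1$ in the $k$-tree $T-a$ of order $\ge k+1$, contradicting the fact (noted in the text) that every vertex of such a $k$-tree has degree at least $k$. Since $V(C)$ is a clique it therefore contains at most one $k$-leaf, so the only way all $k$-leaves can sit inside $V(C)\cup\{v\}$ is that $T$ has exactly two $k$-leaves, namely $v$ and a single vertex $c^*\in V(C)$. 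By the characterisation of path-type $k$-trees recalled just before the statement, $T$ is then path-type, with $C$ simplicial (it contains $c^*$) and $v$ a $k$-leaf, so $P_T(C,v)=T$; any valid $P$ is a spanning sub-$k$-tree of the path-type $k$-tree $T$, and since a $k$-tree on a fixed vertex set has a fixed number of edges, $P=T$, giving uniqueness.

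The step I expect to be the main obstacle is making the reductions airtight rather than any single computation. Concretely, in the free-leaf case the crux is the degree bookkeeping showing that the deleted leaf misses \emph{every} valid path (so that uniqueness in $T-x$ transfers back to $T$), and in the no-free-leaf case the crux is forcing $T$ itself to be the desired path-type tree. Both hinge on the lemma that no two $k$-leaves of a $k$-tree of order $\ge k+2$ are adjacent, together with the standard fact that such a $k$-tree has at least two $k$-leaves; I would therefore establish these two auxiliary facts first, after which the induction becomes routine.
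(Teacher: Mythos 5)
The paper does not actually prove this lemma---it is imported verbatim from Stephens and Oellermann \cite{Stephens}---so there is no in-paper proof to compare against and I can only judge your argument on its own terms. Your skeleton (induction on $|V(T)|$, split on whether some $k$-leaf lies outside $V(C)\cup\{v\}$) is sensible, and the free-leaf case is correct: since any valid $P$ of order at least $k+2$ is path-type with its two $k$-leaves being $v$ and a vertex of $V(C)$, a free $k$-leaf $x$ appearing as some $w_i$ would need $\deg_P(x)\ge k+1>\deg_T(x)$, which is impossible for an induced subgraph; hence every valid $P$ lives in $T-x$ and both existence and uniqueness transfer. Your two auxiliary facts (at least two $k$-leaves once $n\ge k+2$; no two $k$-leaves adjacent) are also fine, granting the standard facts that simplicial vertices of a $k$-tree are exactly its degree-$k$ vertices and that deleting one leaves a $k$-tree.

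The gap is in the no-free-leaf case, exactly where you flagged the crux, and it is twofold. For existence you assert $P_T(C,v)=T$ but never exhibit the perfect elimination ordering $(v,w_s,\dots,w_1)$ down to $C$ demanded by condition (2): you must show that $k$-leaves can be peeled off starting from $v$ without ever removing a vertex of $V(C)$ until only $V(C)$ remains, which requires, at each stage, that the deletion of a $k$-leaf from a path-type $k$-tree yields a path-type $k$-tree and that its $k$-leaf other than $c^*$ lies outside the clique $V(C)$ (the latter does follow from your non-adjacency lemma, but the former is not in your write-up). More seriously, your uniqueness argument rests on the bare assertion that \emph{every} valid $P$ is spanning; the edge-count remark only explains why a spanning induced sub-$k$-tree equals $T$, not why $V(P)=V(T)$ in the first place. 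What you actually need is: if $P$ is a proper induced sub-$k$-tree of $T$ containing $V(C)\cup\{v\}$, then $V(T)\setminus V(P)$ contains a $k$-leaf of $T$ (which would then be a free $k$-leaf, contradicting the case hypothesis). That claim is true, but it is essentially the statement that $T$ admits a perfect elimination ordering down to any sub-$k$-tree---a fact of the same depth as the lemma being proved---so it cannot simply be asserted. It can be repaired, e.g.\ by noting that $\deg_T(v)=k$ forces $N_P(v)=N_T(v)\subseteq V(P)$ and then propagating this along the path-type structure of $T$ after deleting $v$, but as written case 2 is incomplete precisely at its load-bearing step.
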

Figure \ref{fig3} shows an example of a $2$-tree and a path-type $2$-tree $P_T(C,v_4)$ from \cite{Stephens}, where $C=\{c_1,c_2\}$.
\begin{figure}[!ht]
	\centering
	\includegraphics
	[width=10 cm]{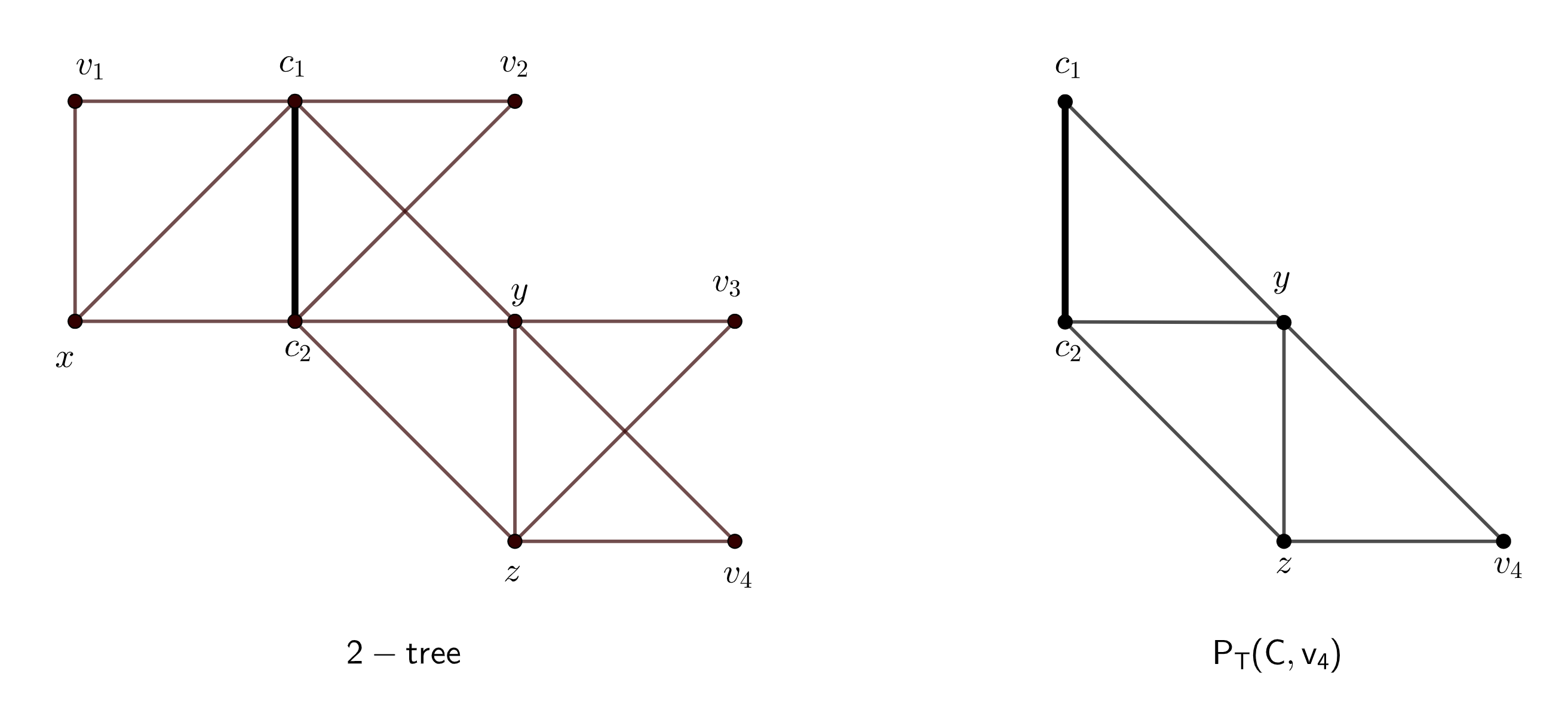}
	\caption{A $2$-tree and its path-type $2$-tree $P_T(C,v_4)$.}
	\label{fig3}
\end{figure}

Recall that $L(T)$ is the set of all $k$-leaves of $T$.
The following result,
	due to Stephens and Oellermann~\cite{Stephens},
shows that each $k$-tree can be represented as the union of path-type $k$-trees starting at a $k$-clique $C$ and ending at $k$-leaves.

\begin{lemma}[\cite{Stephens}]
	\label{decom}
        For any nontrivial $k$-tree $T$
        and any $k$-clique $C$, we have
\begin{align*}
        T=\left(\bigcup_{v\in L(T)
        	\setminus V(C)}
        V(P_T(C,v)),\bigcup_{v\in L(T)\setminus V(C)}
        E(P_T(C,v))\right).
\end{align*}
\end{lemma}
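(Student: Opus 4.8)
The plan is to establish the asserted identity of graphs by proving the two inclusions for vertex sets and for edge sets separately, where throughout the unions range over $v\in L(T)\setminus V(C)$. The inclusions $\bigcup_{v}V(P_T(C,v))\subseteq V(T)$ and $\bigcup_{v}E(P_T(C,v))\subseteq E(T)$ are immediate, since by Lemma \ref{uniq} each $P_T(C,v)$ is a subgraph of $T$. The substance is the reverse inclusions, and I would first reduce both of them to the single \emph{covering statement}: for every $(k+1)$-clique $B$ of $T$ there is a $k$-leaf $v\in L(T)\setminus V(C)$ with $V(B)\subseteq V(P_T(C,v))$. This reduction rests on two observations. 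By Lemma \ref{uniq} each $P_T(C,v)$ is the subgraph \emph{induced} by its vertex set, so whenever both endpoints of an edge of $T$ lie in $V(P_T(C,v))$ that edge automatically belongs to $E(P_T(C,v))$; and since $T$ is chordal with all maximal cliques of size exactly $k+1$ (as $n\ge k+1$), every vertex and every edge of $T$ lies in some $(k+1)$-clique. Granting the covering statement, each $(k+1)$-clique sits inside $V(P_T(C,v))$ for a suitable $v$, so every vertex is covered and, by the induced-subgraph property, so is every edge, which gives the reverse inclusions and hence the equality.

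It then remains to prove the covering statement, which I would do by induction on $n=|V(T)|$, peeling off one $k$-leaf at a time. The base case $T=K_{k+1}$ is trivial, since its only non-$C$ vertex is a $k$-leaf $u$ with $P_T(C,u)=T$. For the inductive step choose a $k$-leaf $u\notin V(C)$ (one exists, e.g.\ as the first vertex of a perfect elimination ordering down to $C$) and set $T'=T-u$, a $k$-tree still containing $C$. The key structural tool is an \emph{extension lemma}: if $P$ is a path-type $k$-tree in $T$ with $C$ as a simplicial $k$-clique whose far $k$-leaf $w$ is \emph{not} a $k$-leaf of $T$, then some vertex of $T$ outside $V(P)$ is adjacent to a simplicial $k$-clique of $P$ at the $w$-end, so $P$ can be enlarged by one vertex into a larger path-type $k$-tree that keeps $C$ simplicial and retains all of $V(P)$; iterating terminates at a genuine $k$-leaf $v$, and by the uniqueness in Lemma \ref{uniq} the resulting object is exactly $P_T(C,v)$. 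Now given a $(k+1)$-clique $B$: if $u\in B$ then $B=\{u\}\cup N_T(u)$ is the unique maximal clique on the simplicial vertex $u$, and I take $v=u$, since $\{u\}\cup N_T(u)\subseteq V(P_T(C,u))$; if $u\notin B$ then $B$ is a $(k+1)$-clique of $T'$, the inductive hypothesis supplies a $k$-leaf $v'$ of $T'$ with $B\subseteq V(P_{T'}(C,v'))$, and I lift this path back to $T$—either $v'$ is still a $k$-leaf of $T$, so uniqueness gives $P_{T'}(C,v')=P_T(C,v')$, or $v'$ gained a neighbor upon restoring $u$ and the extension lemma pushes the path past $v'$ to a true $k$-leaf $v$—keeping $B$ inside in either case.

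The main obstacle is the bookkeeping in the inductive step: the set of $k$-leaves does not behave monotonically under deletion of $u$, since a $k$-leaf of $T'$ in $N_T(u)$ need not remain a $k$-leaf of $T$, and I must verify that the path witnessing the covering in $T'$ lifts to a path of the correct canonical form in $T$. This is precisely where the induced-subgraph and uniqueness parts of Lemma \ref{uniq}, together with the extension lemma, are indispensable: uniqueness lets me identify an enlarged path-type $k$-tree with the canonical $P_T(C,v)$, while the extension lemma guarantees I am never stranded at a vertex that is a leaf of the current path but not of $T$. Proving the extension lemma itself—that a path-type $k$-tree not yet ending at a true $k$-leaf always admits such a one-vertex enlargement inside $T$—is the technical heart, and I would derive it from the running-intersection (clique-tree) structure of the chordal graph $T$: the maximal cliques containing the far $k$-leaf $w$ form a subtree, and stepping one edge of that subtree away from the current end-block produces exactly the required new vertex adjacent to a simplicial $k$-clique at the $w$-end.
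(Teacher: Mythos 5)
The paper does not prove Lemma \ref{decom}; it is imported from Stephens and Oellermann \cite{Stephens} without proof, so there is no internal argument to compare yours against. Judged on its own, your proposal is sound and would serve as a self-contained proof. The reduction to the covering statement is clean: by Lemma \ref{uniq} each $P_T(C,v)$ is the subgraph induced on its vertex set, and every vertex and edge of a $k$-tree of order at least $k+1$ lies in a $(k+1)$-clique, so it suffices to cover the $(k+1)$-cliques. One remark on the inductive step: you never actually need the general extension lemma there. If $v'\in L(T')\setminus L(T)$, the only possible lost neighbour of $v'$ is the deleted $k$-leaf $u$ itself; since $N_T(u)$ is a $k$-clique containing $v'$ whose other $k-1$ vertices are neighbours of $v'$ in $T'$ and hence lie in $V(P_{T'}(C,v'))$ (because $v'$ is a $k$-leaf of both $T'$ and of the induced subgraph $P_{T'}(C,v')$), the one-step enlargement $T[V(P_{T'}(C,v'))\cup\{u\}]$ is a path-type $k$-tree with $C$ simplicial and $u$ a $k$-leaf, hence equals $P_T(C,u)$ by uniqueness, and the iteration stops immediately because $u\in L(T)\setminus V(C)$ by choice. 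The general extension lemma is the only place where your sketch rests on an unproved structural claim --- an arbitrary neighbour of $w$ outside $V(P)$ need not see a full $k$-clique of $P$ at the $w$-end, so you genuinely need to select the new vertex from a maximal clique adjacent to the end block in the clique tree, as you indicate --- but since it is dispensable in your induction, I would drop it and keep only the one-step version. With that simplification the argument is complete modulo standard facts about $k$-trees (a $k$-leaf is simplicial, deleting a $k$-leaf leaves a $k$-tree, and a perfect elimination ordering down to $C$ exists).
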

In the following, the characteristic 1-tree is defined.
For any $k$-clique $C$ of $T$
and any vertex $v\in V(T)\setminus V(C)$,
if $A_T(C,v)=(C,w_1,w_2,\cdots,w_s,v)$,
then  let $P'_T(C,v)$ denote
the path with vertex set
\begin{align*}
        V(P'_T(C,v))=\{C,w_1,w_2,\cdots,w_s,v\}
\end{align*}
and edge set
\begin{align*}
        E(P'_T(C,v))=\{Cw_1,w_1w_2,w_2w_3,\cdots,w_{s-1}w_s,w_sv\}.
\end{align*}

\begin{definition}
Let $T$ be a nontrivial $k$-tree and  $C$ be any $k$-clique in $T$.
The {\it characteristic $1$-tree} of $T$ with respect to $C$, is the graph $T'_C$ defined by
\begin{align*}
        \left(\bigcup_{v\in L(T)\setminus V(C)}
        V(P'_T(C,v)),
        \bigcup_{v\in L(T)\setminus V(C)}
        E(P'_T(C,v))\right).
\end{align*}
\end{definition}
It is obvious that the graph $T'_C$ is connected. Moreover the uniqueness of $A_T(C,v)$, by Lemma \ref{uniq}, guarantees that the graph $T'_C$ is acyclic. Thus the characteristic $1$-tree is a tree and is the unique tree such that $A_{T'_C}(C,v)=A_T(C,v)$ for all
$v\in L(T)\setminus V(C)$. We can take the convention that the characteristic $1$-tree of the trivial $k$-tree is $K_1$.

It is clear that $\lvert V(T) \rvert=\lvert V(T'_C) \rvert+k-1$.
Stephens and Oellermann~\cite{Stephens} found
a bijection between the sub-$k$-trees containing the clique $C$ of a $k$-tree $T$ and the subtrees of $T'_C$ containing the vertex $C$.
They also established a relation
between $\mu(T;C)$ and $\mu(T'_C;C)$
as stated below.

\begin{lemma}[\cite{Stephens}]
	\label{bilemma}
        For any $k$-tree $T$ containing a $k$-clique $C$, \begin{align}\notag
                \mu(T;C)=\mu(T'_C;C)+k-1.
        \end{align}
\end{lemma}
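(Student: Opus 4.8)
The plan is to reduce the stated identity to a single clean relation between the two local polynomials, namely $\phi_{T,C}(x) = x^{k-1}\phi_{T'_C,C}(x)$, and then recover the mean orders by taking a logarithmic derivative at $x=1$. The whole identity then collapses to a one-line computation, so the real work lies in pinning down how the orders of corresponding trees are related under the Stephens--Oellermann correspondence.

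First I would make the order bookkeeping behind that bijection explicit. Write $\Psi$ for the bijection between sub-$k$-trees of $T$ containing $C$ and subtrees of $T'_C$ containing the vertex $C$. By the definition of the characteristic $1$-tree, the vertex set of $T'_C$ is $\{C\}\cup(V(T)\setminus V(C))$, where the single vertex $C$ stands in for the $k$ vertices of the clique $C$. A subtree $S'$ of $T'_C$ containing $C$ is therefore determined by $C$ together with a set $U\subseteq V(T)\setminus V(C)$, and the matching sub-$k$-tree $S=\Psi^{-1}(S')$ has vertex set $V(C)\cup U$. Hence $|V(S)| = k+|U|$ while $|V(S')| = 1+|U|$, so that
\[
|V(S)| = |V(S')| + k - 1
\]
for every tree in the correspondence. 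Consequently $\Psi$ restricts to a bijection from sub-$k$-trees of order $i$ containing $C$ onto subtrees of $T'_C$ of order $i-k+1$ containing $C$, which gives the counting identity $a_i(T;C) = a_{i-k+1}(T'_C;C)$ for all $i\ge k$.

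From this the polynomial relation follows immediately:
\[
\phi_{T,C}(x) = \sum_{i\ge k} a_i(T;C)\,x^i = \sum_{i\ge k} a_{i-k+1}(T'_C;C)\,x^i = x^{k-1}\sum_{j\ge 1} a_j(T'_C;C)\,x^j = x^{k-1}\phi_{T'_C,C}(x).
\]
Evaluating at $x=1$ gives $\phi_{T,C}(1) = \phi_{T'_C,C}(1)$, while differentiating the product $x^{k-1}\phi_{T'_C,C}(x)$ and setting $x=1$ gives $\phi'_{T,C}(1) = (k-1)\phi_{T'_C,C}(1) + \phi'_{T'_C,C}(1)$. Dividing the second by the first yields
\[
\mu(T;C) = \frac{\phi'_{T,C}(1)}{\phi_{T,C}(1)} = (k-1) + \frac{\phi'_{T'_C,C}(1)}{\phi_{T'_C,C}(1)} = \mu(T'_C;C) + (k-1),
\]
as claimed.

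The only genuine content, and therefore the main obstacle, is the order-shift verification in the second step: that $\Psi$ changes the order of a tree by the constant $k-1$, because it trades the single representative vertex $C$ of $T'_C$ for the $k$ clique vertices $V(C)$ while leaving the remaining vertex set untouched. Once that constant shift is established, the passage to the polynomial identity and the logarithmic-derivative computation are entirely routine.
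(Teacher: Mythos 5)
Your proposal is correct, but note that the paper itself offers no proof of this lemma: it is imported verbatim from Stephens and Oellermann \cite{Stephens}, with the surrounding text only recalling that a bijection exists between sub-$k$-trees of $T$ containing $C$ and subtrees of $T'_C$ containing the vertex $C$. Your reconstruction is essentially the argument of the original source: the bijection $S\mapsto S'_C$ fixes every vertex outside $C$ and collapses $V(C)$ to the single vertex $C$, giving the constant order shift $k-1$, hence $\phi_{T,C}(x)=x^{k-1}\phi_{T'_C,C}(x)$ and the identity by logarithmic differentiation at $x=1$. One point worth flagging: the bare existence of a bijection (as quoted in the paper) would prove nothing about orders, so your step asserting $V(\Psi^{-1}(S'))=V(C)\cup U$ is not a triviality but precisely the substance of the Stephens--Oellermann correspondence --- namely that the characteristic tree $S'_C$ of a sub-$k$-tree $S\supseteq C$ is a subtree of $T'_C$ (which rests on $A_S(C,v)=A_T(C,v)$ for $v\in V(S)\setminus V(C)$, a consequence of the uniqueness in Lemma \ref{uniq}) and that every subtree of $T'_C$ through $C$ arises this way. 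You correctly isolate this as the crux, so citing the explicit form of the bijection from \cite{Stephens}, rather than its mere existence, closes the argument; your computation handles the trivial case $T=K_k$ as well, since then $\mu(T;C)=k=1+(k-1)$ under the convention $T'_C=K_1$.
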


Two $k$-cliques in a $k$-tree $T$ are called \emph{adjacent} if they are contained in a $(k+1)$-clique of $T$.
In general, two characteristic $1$-trees of $T$ with respect to two different $k$-cliques $C_1$ and $C_2$ are unrelated.
However,
if $C_1$ and $C_2$ are adjacent,
we find that $T'_{C_2}$ can be obtained
from $T'_{C_1}$ by a partial Kelmans operation.

We first give some notations. For a $k$-clique $C$ in $T$,
let $N_T(C)$ denote the set of vertices
$v$ in $V(T)\setminus V(C)$
which is adjacent to all vertices of $C$.
For a $(k+1)$-clique $Q$ in $T$, we denote by $U_{T}(Q)$ the set of the vertices $v$
in $V(T)\setminus V(Q)$
such that $v\in N_T(C)$ for some
$k$-clique $C$ of $Q$.

\begin{lemma}\label{adlemma}
Let $T$ be a nontrivial $k$-tree,
and let $C_1$ and $C_2$ be two
adjacent $k$-cliques in $T$.
Then $T'_{C_2}$ is isomorphic to the tree obtained by some partial Kelmans operation on $T'_{C_1}$ from
$c_2$ to $C_1$,
where $c_{2}\in V(C_2)\setminus V(C_{1})$.
\end{lemma}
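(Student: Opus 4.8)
The plan is to put both characteristic $1$-trees on (almost) the same vertex set, read their edges off the attachment structure of $T$, and show that passing from one to the other amounts to reattaching a single block of vertices. First I fix notation. Since $C_1,C_2$ are adjacent, $Q:=V(C_1)\cup V(C_2)$ is a $(k+1)$-clique, $D:=V(C_1)\cap V(C_2)$ has $k-1$ vertices, and I write $\{c_1\}=V(C_1)\setminus V(C_2)$ and $\{c_2\}=V(C_2)\setminus V(C_1)$, so that $C_1=D\cup\{c_1\}$, $C_2=D\cup\{c_2\}$ and $Q=D\cup\{c_1,c_2\}$. As $c_2$ is adjacent to every vertex of $C_1$, we have $c_2\in N_T(C_1)$, so $c_2$ is a vertex of $T'_{C_1}$ joined to the root $C_1$; symmetrically $c_1\in N_T(C_2)$ is joined to the root $C_2$ of $T'_{C_2}$. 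From $V(T)\setminus V(C_1)=\bigl(V(T)\setminus V(Q)\bigr)\cup\{c_2\}$ and $V(T)\setminus V(C_2)=\bigl(V(T)\setminus V(Q)\bigr)\cup\{c_1\}$ I define the bijection $\iota\colon V(T'_{C_1})\to V(T'_{C_2})$ by $\iota(C_1)=c_1$, $\iota(c_2)=C_2$, and $\iota(x)=x$ for every $x\in V(T)\setminus V(Q)$; the goal is to exhibit a partial Kelmans operation on $T'_{C_1}$ from $c_2$ to $C_1$ whose output is carried by $\iota$ onto $T'_{C_2}$.

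Next I describe the adjacencies of a characteristic $1$-tree. Because $A_{T'_C}(C,v)=A_T(C,v)$ and $T'_C=\bigcup_{v}P'_T(C,v)$, the neighbour of a vertex $u\in V(T)\setminus V(C)$ towards the root is the vertex preceding $u$ in $A_T(C,u)$, namely the most recently constructed vertex of the $k$-clique to which $u$ is attached in the unique path-type $k$-tree $P_T(C,u)$ of Lemma \ref{uniq}; in particular the children of the root are exactly the vertices of $N_T(C)$. I then examine the first layer $U_T(Q)$. Each $v\in U_T(Q)$ is adjacent to all of a unique $k$-subclique of $Q$ (it cannot be adjacent to two, as that would create a $K_{k+2}$), and the $k$-subcliques of $Q$ are $C_1$, $C_2$, and $Q\setminus\{d\}$ for $d\in D$. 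Accordingly $U_T(Q)$ splits into $A_1:=N_T(C_1)\setminus V(Q)$, $A_2:=N_T(C_2)\setminus V(Q)$ and
\[
W:=\bigcup_{d\in D}\bigl(N_T(Q\setminus\{d\})\setminus V(Q)\bigr).
\]
A short birth-clique computation then gives the two layer pictures: in $T'_{C_1}$ the root $C_1$ has children $A_1\cup\{c_2\}$, while $c_2$ has children $A_2\cup W$; in $T'_{C_2}$ the root $C_2$ has children $A_2\cup\{c_1\}$, while $c_1$ has children $A_1\cup W$.

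Comparing these pictures through $\iota$ isolates $W$ as the block to be moved. Since $N_{T'_{C_1}}(c_2)\setminus N_{T'_{C_1}}[C_1]=A_2\cup W$, the partial Kelmans operation on $T'_{C_1}$ from $c_2$ to $C_1$ that replaces each edge $c_2w$ by $C_1w$ for $w\in W$ is admissible. Writing $T^\ast$ for its output, the root $C_1$ now has neighbours $\{c_2\}\cup A_1\cup W$ and $c_2$ has neighbours $\{C_1\}\cup A_2$, all other adjacencies being unchanged. Applying $\iota$ (which sends $C_1\mapsto c_1$ and $c_2\mapsto C_2$ and fixes $A_1,A_2,W$) turns these into: $c_1$ adjacent to $\{C_2\}\cup A_1\cup W$ and $C_2$ adjacent to $\{c_1\}\cup A_2$, which are precisely the neighbourhoods of $c_1$ and of the root $C_2$ in $T'_{C_2}$, the pivot edge $c_2C_1$ corresponding to the edge $C_2c_1$. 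Hence $\iota$ matches the top two layers of $T^\ast$ and $T'_{C_2}$.

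It remains to check that nothing below the first layer is disturbed, and this is the step I expect to be the main obstacle. Concretely, I must prove that for every $u\in V(T)\setminus\bigl(V(Q)\cup U_T(Q)\bigr)$ the neighbour of $u$ towards the root is the same vertex in $T'_{C_1}$ and in $T'_{C_2}$, and likewise that the subtrees hanging below each $v\in A_1\cup A_2\cup W$ are identical in the two trees. All of this reduces to a single invariance statement: the sequences $A_T(C_1,u)$ and $A_T(C_2,u)$ share a common tail once they have reached a vertex of $V(T)\setminus V(Q)$; equivalently, the clique path from $C_i$ to $u$ is independent of $i\in\{1,2\}$ beyond $Q$. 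I would prove this from the uniqueness in Lemma \ref{uniq}, noting that both $P_T(C_1,u)$ and $P_T(C_2,u)$ contain $Q$ and that the portion from $Q$ to $u$ is itself a path-type $k$-tree with $u$ as a $k$-leaf, hence determined independently of the starting clique. Granting this invariance, the partial Kelmans operation only relabels the edges joining $c_2$ and $C_1$ to the first layer, so $\iota$ is an isomorphism from $T^\ast$ onto $T'_{C_2}$, which proves the lemma.
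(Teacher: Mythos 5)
Your proposal follows essentially the same route as the paper: the same bijection ($C_1\mapsto c_1$, $c_2\mapsto C_2$, identity elsewhere), the same partial Kelmans operation moving exactly your block $W$ (which is the paper's $A_Q(C_1,C_2)=U_T(Q)\setminus(N_T(C_1)\cup N_T(C_2))$), and the same tail-invariance of the attachment sequences beyond $Q$, which the paper likewise justifies by the uniqueness in Lemma~\ref{uniq} via a short case analysis rather than a fully detailed argument. One small caveat: your justification that ``both $P_T(C_1,u)$ and $P_T(C_2,u)$ contain $Q$'' fails when the path to $u$ exits through $N_T(C_1)\setminus V(Q)$ or $N_T(C_2)\setminus V(Q)$ (then one of the two path-type $k$-trees omits $c_2$ or $c_1$), but in those cases the two attachment sequences differ only by inserting or deleting a single initial vertex, so the common-tail claim you actually need still holds.
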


\begin{proof}
Let $Q$ denote the subgraph
induced by $V(C_1)\cup V(C_2)$.
As $C_1$ and $C_2$ are adjacent in $T$, $Q$ is a $(k+1)$-clique of $T$.
Let $c_1$ be the only vertex in
$V(C_1)\setminus V(C_2)$.
Then,
$V(Q)=V(C_i)\cup \{c_{3-i}\}$
for both $i=1,2$.
\begin{figure}[h]
	\centering
	\includegraphics[width=0.6\linewidth]{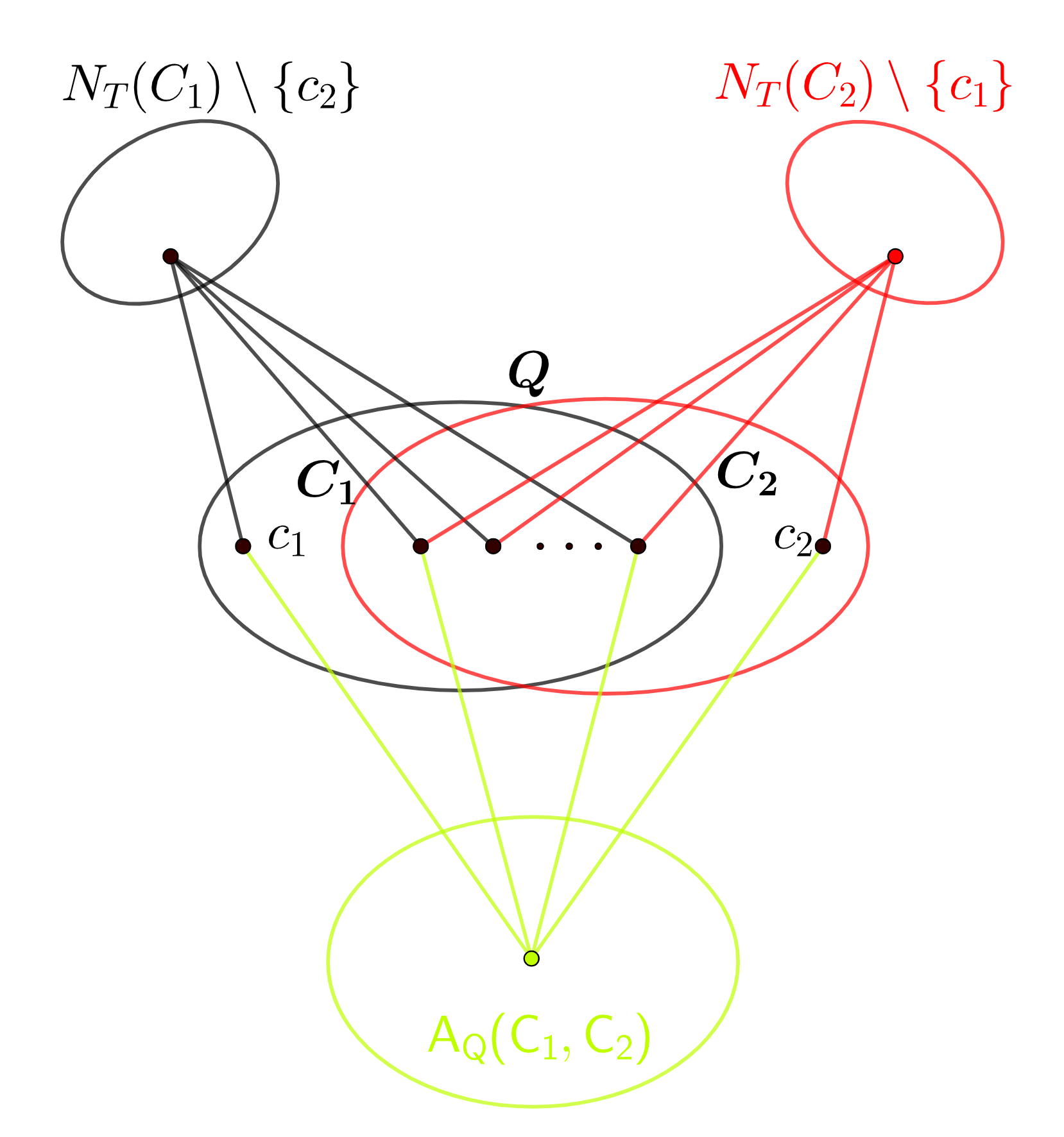}
	\caption{The neighbours of the $(k+1)$-clique $Q=C_1\cup C_2$.}
	\label{fig4}
\end{figure}
Let $A_Q(C_1,C_2)=U_{T}(Q)\setminus (N_T(C_{1})\cup N_T(C_{2}))$ (see Figure \ref{fig4}),
and let $\tilde{T}'_{C_1}$ be the tree obtained by the partial Kelmans operation on $T'_{C_1}$ from $c_2$ to $C_1$, which replaces the edge $c_2u$ by a new edge $C_1u$ for each $u\in A_Q(C_1,C_2)$,
as shown in Figure \ref{fig5}.

We will complete the proof by showing
that the mapping $f$ defined below is
an isomorphism  from
$\tilde{T}'_{C_1}$ to
$T'_{C_2}$:
$$
f(C_1)=c_1, \quad
f(c_2)=C_2,\quad
f(u)=u, \forall
u\in V(\tilde{T}'_{C_1})\setminus \{C_1,c_2\}.
$$

We consider the relationship between $T'_{C_1}$ and $T'_{C_2}$. For each vertex $u\in N_T(C_1)\setminus \{c_2\}$, $u$ is adjacent to $C_1$ in $T'_{C_1}$, and $u$ is also adjacent to $c_1$ in $T'_{C_2}$.  Similarly, for each vertex $v\in N_T(C_2)\setminus \{c_1\}$, $v$ is adjacent to $C_2$ in $T'_{C_2}$, and $v$ is also adjacent to $c_2$ in $T'_{C_1}$. For each vertex $w\in A_Q(C_1,C_2)$, $w$ is adjacent to $c_2$ in $T'_{C_1}$ and adjacent to $c_1$ in $T'_{C_2}$, respectively. See Figure \ref{fig5}.
\begin{figure}[h]
	\centering
	\includegraphics[width=0.9\linewidth]{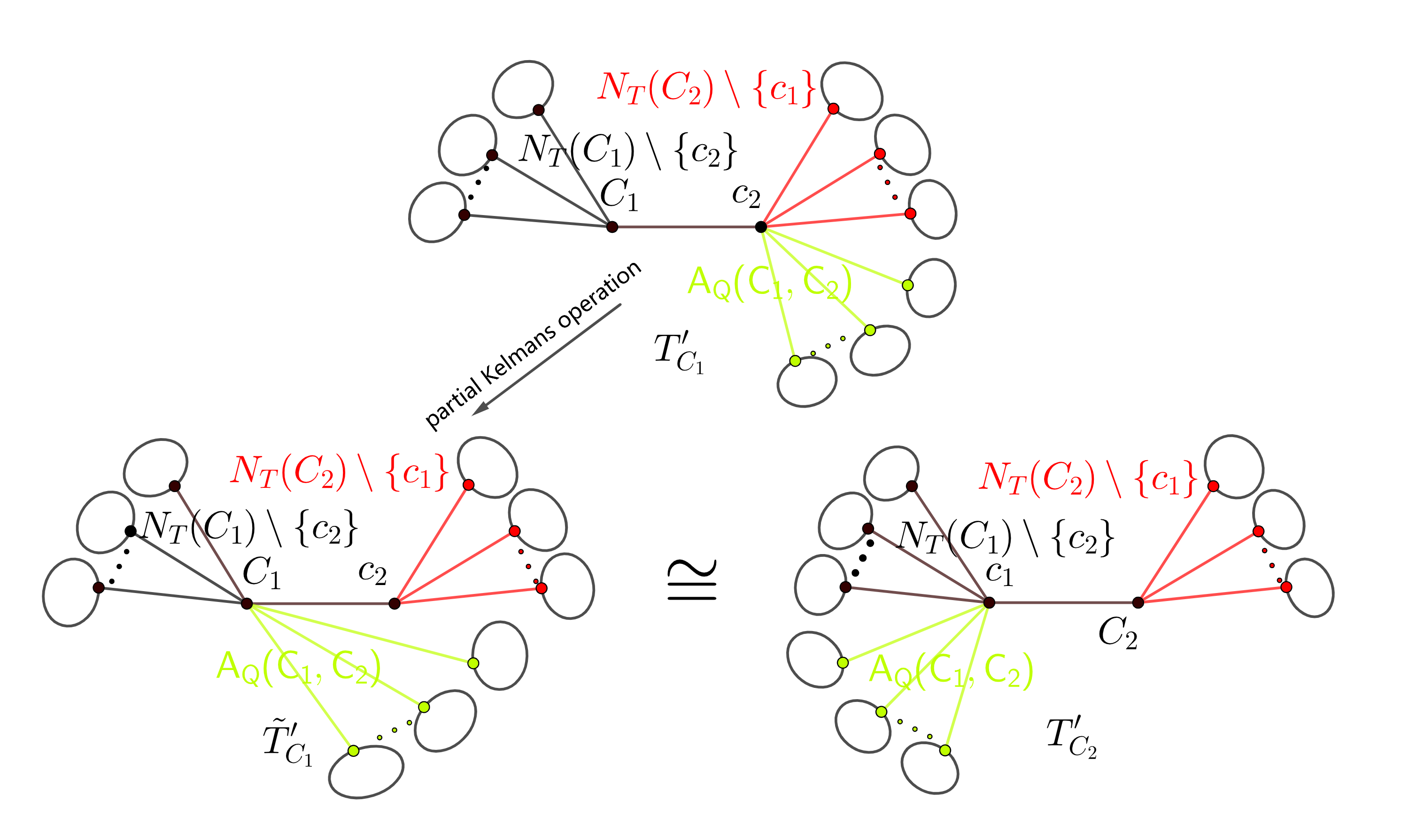}
	\caption{An illustration of the relationship between the two characteristic $1$-trees $T_{C_1}'$ and $T_{C_2}'$.}
	\label{fig5}
\end{figure}

For each vertex $v \in V(T)\setminus (U_T(Q)\cup V(Q))$,  by Lemma \ref{uniq}, there exists a unique path $P'_T(C_1,v)$ = $(C_1,v_1,v_2,\cdots,v_i,v)$ in the characteristic $1$-tree $T'_{C_1}$.

{\bf Case 1.} $v_1\in Q$.

In this case, $v_1=c_2$.

{\bf Subcase 1.1.} $v_2\in A_Q(C_1,C_2)$.

 In this case, $P'_{T}(C_2,v)$ = $(C_2,c_{1},v_2,v_3,\cdots,v_i,v)$ is also a unique path in the characteristic $1$-tree $T'_{C_2}$,
 corresponding to the unique path
 $(c_2,C_{1},v_2,v_3,\cdots,v_i,v)$
 in $\tilde{T}_{C_1}'$.

 {\bf Subcase 1.2.} $v_2\in N_T(C_2)\setminus \{c_1\}$.

 In this case, $P'_{T}(C_2,v)$ = $(C_2,v_2,v_3,\cdots,v_i,v)$ is a unique path in the characteristic $1$-tree $T'_{C_2}$,
  corresponding to the unique path
 	$(c_2,v_2,v_3,\cdots,v_i,v)$
 	in $\tilde{T}_{C_1}'$.

{\bf Case 2.} $v_1\notin Q$.

Since $v_1\notin Q$, we have $v_1\in N_T(C_1)\setminus \{c_2\}$. Then $P'_{T}(C_2,v)$ = $(C_2,c_1,v_1,v_2,\cdots,v_i,v)$ is a unique path in the characteristic $1$-tree $T'_{C_2}$,
corresponding to the unique path
	$(c_2,C_1, v_1,v_2,\cdots,v_i,v)$
	in $\tilde{T}_{C_1}'$.

Hence $f$ is an isomorphism
from $\tilde{T}_{C_1}'$ to
${T}_{C_2}$,
and the result holds.
%Let $f$ : $\tilde{T}'_{C_1}\rightarrow T'_{C_2}$ be a function defined by the rule $f(C_1)=c_1$, $f(c_2)=C_1$ and $f(u)=u$ for all $u\in V(\tilde{T}'_{C_1})-\{C_1,c_2\}$ (see Figure \ref{fig5}). As above  conclusion, we have $f$ is an isomorphism and $\tilde{T}'_{C_1}\cong T'_{C_2}$.
\end{proof}

Figure \ref{fig2} shows an example of a $3$-tree and its characteristic $1$-trees with respect to adjacent $3$-cliques $C_1$ and $C_2$, respectively.
Let's introduce the following lemma that we need.
\begin{figure}[h]
\centering
\includegraphics[width=0.8\linewidth]{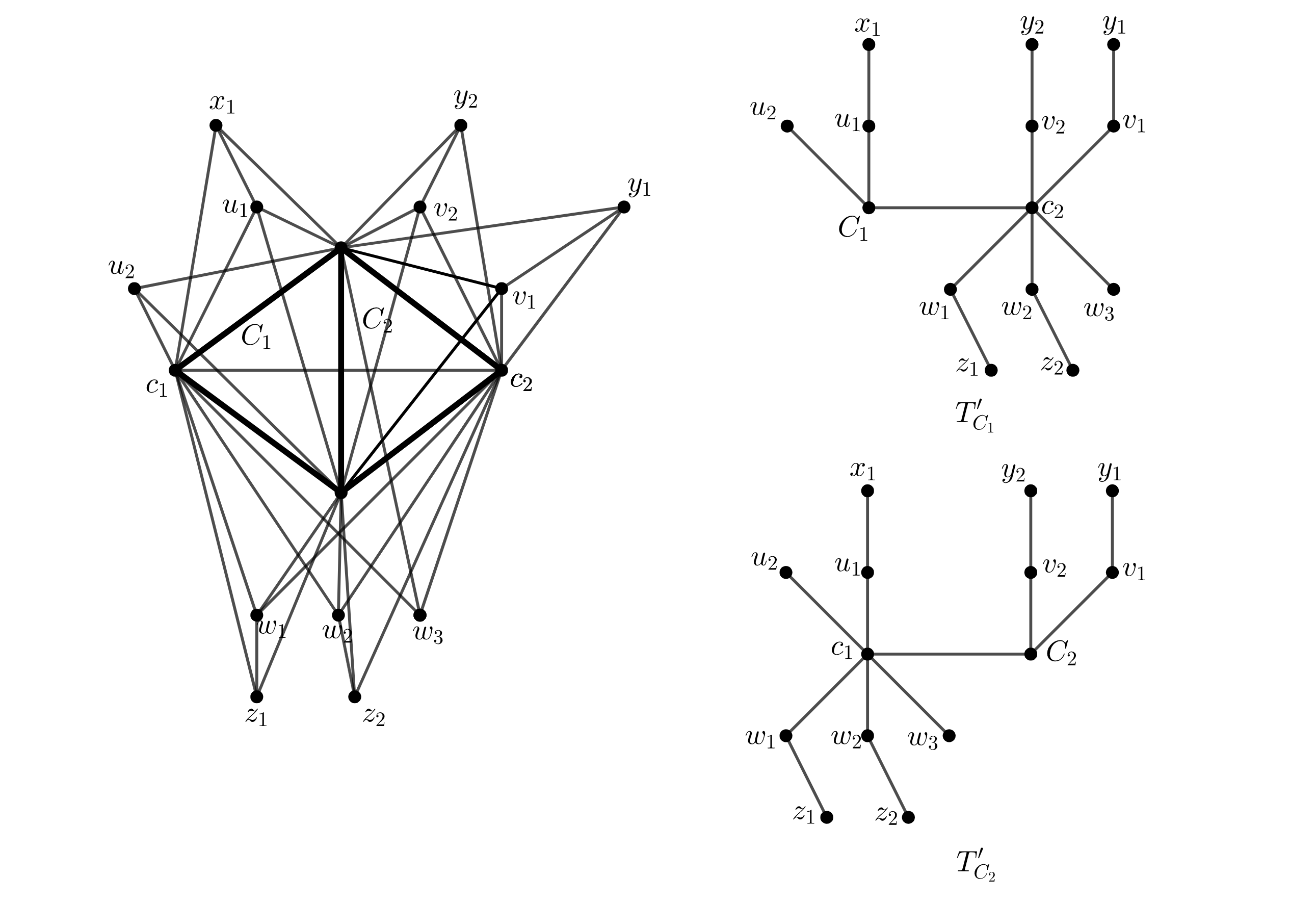}
\caption{A $3$-tree and its characteristic $1$-trees with respect to $3$-clique $C_1$ and $C_2$, respectively.}
\label{fig2}
\end{figure}

\begin{lemma}[\cite{Wagner2}]
	\label{treelemma}
Let $u$ be a vertex of degree at least 3 in a tree $T$. Then there exists a neighbour $v$ of $u$ such that
$\mu(T;v)>\mu(T;u)$.

\end{lemma}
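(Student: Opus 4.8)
The plan is to turn the statement into a single scalar inequality at $u$ and then win by an extremal choice of neighbour. First I would record Lemma \ref{lemma2.0} in logarithmic-derivative form: differentiating $\phi_{T,u}(x)=x\prod_{w\in N_T(u)}(1+\phi_{T^*_w,w}(x))$ and evaluating at $x=1$ gives
\[
\mu(T;u)=1+\sum_{w\in N_T(u)} g_w,\qquad g_w:=\frac{\phi'_{T^*_w,w}(1)}{1+\phi_{T^*_w,w}(1)} .
\]
Writing $p_w:=\phi_{T^*_w,w}(1)$ and $c_w:=g_w/p_w$, the very computation that produces (\ref{eq5})--(\ref{eq6}) collapses, for each neighbour $v$ of $u$, to the compact difference
\[
\mu(T;v)-\mu(T;u)=c_v-\rho_v,\qquad \rho_v:=\frac{1+\sum_{w\in N_T(u)\setminus\{v\}} g_w}{1+\prod_{w\in N_T(u)\setminus\{v\}}(1+p_w)} .
\]
Thus the lemma is exactly the assertion that \emph{some} neighbour satisfies $c_v>\rho_v$.

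Next I would isolate two elementary facts about the branches. (i) Every subtree counted by $\phi_{T^*_w,w}$ has order at least $1$, so $\phi'_{T^*_w,w}(1)\ge \phi_{T^*_w,w}(1)=p_w$ and hence $c_w\ge 1/(1+p_w)$, with equality only when $T^*_w$ is a single vertex. (ii) Since $\deg_T(u)\ge 3$, the product in the denominator of $\rho_v$ has at least two factors; expanding it through elementary symmetric functions yields $\prod_{w\ne v}(1+p_w)-\sum_{w\ne v}p_w\ge 1+p_{\min}$, where $p_{\min}=\min_{w\in N_T(u)}p_w$ (one pairwise term $p_ap_b\ge p_{\min}$ already forces this). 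For orientation, Lemma \ref{lemma2.1} simultaneously gives $c_w\le \tfrac12$ and, applied to the subtree of $u$ obtained by deleting the branch towards $v$, gives $\rho_v\le\tfrac12$, strictly when $\deg_T(u)\ge 3$ because that subtree has $u$ of degree $\ge2$ and so is not a path with $u$ a leaf.

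The crux is then the choice of neighbour: take $v^{*}$ maximising $c_w$. For this $v^{*}$ we have $c_{v^{*}}\ge 1/(1+p_{\min})$ by (i), and combining with (ii) in the form $Q:=\prod_{w\ne v^{*}}(1+p_w)-\sum_{w\ne v^{*}}p_w\ge 1+p_{\min}$ gives $c_{v^{*}}(1+Q)\ge (2+p_{\min})/(1+p_{\min})>1$. Splitting $\prod_{w\ne v^{*}}(1+p_w)=\sum_{w\ne v^{*}}p_w+Q$ and using $c_{v^{*}}\ge c_w$ to get $c_{v^{*}}\sum_{w\ne v^{*}}p_w\ge\sum_{w\ne v^{*}}g_w$, these two estimates add to exactly $c_{v^{*}}\bigl(1+\prod_{w\ne v^{*}}(1+p_w)\bigr)>1+\sum_{w\ne v^{*}}g_w$, i.e. $c_{v^{*}}>\rho_{v^{*}}$, which is the required strict inequality $\mu(T;v^{*})>\mu(T;u)$.

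The main obstacle I anticipate is precisely this coordination step, and it is worth flagging why a lazier argument fails. Because $c_w\le\tfrac12$ and $\rho_v\le\tfrac12$ both hold, any bound that looks at a single branch in isolation stalls exactly at the $1/2$ boundary, so one cannot simply estimate one neighbour and hope. The resolution is to spend the \emph{strict} product-over-sum surplus $Q\ge 1+p_{\min}$ that exists only because at least two branches remain, and to pair it with the most "path-like" branch (largest $c_w$) so that $c_{v^{*}}\sum p_w$ dominates $\sum g_w$ at the same time; verifying that these two gains coexist is exactly where the hypothesis $\deg_T(u)\ge 3$ is consumed.
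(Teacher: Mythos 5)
Your argument is correct; I checked the key identity and each estimate. Note, however, that the paper does not prove this lemma at all --- it is quoted from Wagner and Wang \cite{Wagner2} and used as a black box --- so there is no internal proof to compare against; what you have produced is a self-contained replacement for the citation. Your reduction is sound: from Lemma \ref{lemma2.0} one gets $\mu(T;u)=1+\sum_{w\in N_T(u)}g_w$, and combining (\ref{eq5}) and (\ref{eq6}) (with $A=\sum_i\alpha_i'/(1+\alpha_i)$, $B=\sum_i\beta_i'/(1+\beta_i)$) gives $\mu(T;u)=1+A+\tfrac{\beta(1+B)}{1+\beta}$ and $\mu(T;v)=1+B+\tfrac{\alpha(1+A)}{1+\alpha}$, whence $\mu(T;v)-\mu(T;u)=\tfrac{1+B}{1+\beta}-\tfrac{1+A}{1+\alpha}=c_v-\rho_v$ exactly as you claim. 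Your two ingredients also check out: $c_w\ge 1/(1+p_w)$ since every subtree has order at least $1$, and $\prod_{w\ne v}(1+p_w)-\sum_{w\ne v}p_w\ge 1+p_{\min}$ because at least two factors remain and every $p_w\ge 1$; adding $c_{v^*}(1+Q)>1$ to $c_{v^*}\sum_{w\ne v^*}p_w\ge\sum_{w\ne v^*}g_w$ (valid by maximality of $c_{v^*}$) yields the strict inequality $c_{v^*}>\rho_{v^*}$. I verified the bookkeeping on $K_{1,3}$: $c_v=1/2$, $\rho_v=2/5$, difference $1/10=\mu(T;v)-\mu(T;u)$. Your diagnosis of why a one-branch estimate stalls at the $1/2$ barrier, and where $\deg_T(u)\ge 3$ is consumed, is also accurate. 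The one cosmetic gap is in the sentence ``$c_{v^*}\ge 1/(1+p_{\min})$ by (i)'': strictly, (i) gives this for the $p$-minimizing neighbour $w_0$, and you then need $c_{v^*}\ge c_{w_0}$ from the maximality of $v^*$; your parenthetical shows you know this, but it should be said explicitly. The net effect of your approach is to make the present paper independent of the external Lemma 3.1/Theorem 4.x machinery of \cite{Wagner2} for this step, at the cost of about a page; whether that trade is worth making is an editorial rather than a mathematical question.
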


We generalize Lemma \ref{treelemma} to $k$-trees.

\begin{theorem}\label{k-trees}
For any major $k$-clique $C_1$ of a $k$-tree $T$, there exists a $k$-clique $C_2$ adjacent to $C_1$ such that $\mu(T;C_2)> \mu(T;C_1)$.
\end{theorem}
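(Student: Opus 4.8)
The plan is to push the problem down to the characteristic $1$-tree and then apply the Wagner--Wang result (Lemma \ref{treelemma}) in concert with the partial Kelmans estimates from Section 3. Throughout I will work in the tree $T'_{C_1}$, where the $k$-clique $C_1$ appears as a single vertex, and I will use Lemma \ref{bilemma} to pass between local mean orders of $T$ at $k$-cliques and local mean orders of characteristic $1$-trees, which differ only by the constant $k-1$.

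First I would record that the degree of the vertex $C_1$ in $T'_{C_1}$ equals $\deg_T(C_1)$. A neighbour of $C_1$ in $T'_{C_1}$ is the first vertex $w_1$ of some sequence $A_T(C_1,v)$, and such a $w_1$ is necessarily adjacent in $T$ to every vertex of $C_1$, so it lies in $N_T(C_1)$. Conversely, using Lemma \ref{decom}, every $u\in N_T(C_1)$ lies on some path $P_T(C_1,v)$; since $C_1$ sits at one end of that path-type $k$-tree, $u$ is forced to be its first vertex, whence $C_1u$ is an edge of $T'_{C_1}$. Thus the neighbours of $C_1$ in $T'_{C_1}$ are exactly the vertices of $N_T(C_1)$, of which there are $\deg_T(C_1)\ge 3$ because $C_1$ is major. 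Lemma \ref{treelemma} then provides a neighbour $c_2\in N_T(C_1)$ with
\[
\mu(T'_{C_1};c_2) > \mu(T'_{C_1};C_1).
\]

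Next I would realize the chosen neighbour as a $k$-clique. Since $c_2$ is adjacent to all of $C_1$, the set $V(C_1)\cup\{c_2\}$ is a $(k+1)$-clique, and for any $c_1\in V(C_1)$ the $k$-clique $C_2=(V(C_1)\setminus\{c_1\})\cup\{c_2\}$ is adjacent to $C_1$ and satisfies $V(C_2)\setminus V(C_1)=\{c_2\}$. By Lemma \ref{adlemma}, $T'_{C_2}$ is isomorphic to the tree $\tilde{T}'_{C_1}$ obtained from $T'_{C_1}$ by the partial Kelmans operation from $c_2$ to $C_1$, through an isomorphism carrying $c_2$ to $C_2$; hence $\mu(T'_{C_2};C_2)=\mu(\tilde{T}'_{C_1};c_2)$. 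Applying Theorem \ref{patialtheorem} with $v=c_2$ and $u=C_1$ gives $\mu(\tilde{T}'_{C_1};c_2)\ge \mu(T'_{C_1};c_2)$, so that
\[
\mu(T'_{C_2};C_2)\ge \mu(T'_{C_1};c_2) > \mu(T'_{C_1};C_1).
\]
Adding $k-1$ to both ends via Lemma \ref{bilemma} yields $\mu(T;C_2)>\mu(T;C_1)$, which is the desired conclusion.

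The substitutions into Lemma \ref{bilemma} and the identification of vertices are routine. The points that will demand care are the degree identity $\deg_{T'_{C_1}}(C_1)=\deg_T(C_1)$, which is what legitimizes invoking Lemma \ref{treelemma}, and keeping the direction of the partial Kelmans operation straight: Theorem \ref{patialtheorem} must be applied at the vertex $c_2$, whose image under the isomorphism of Lemma \ref{adlemma} is the base clique $C_2$ of $T'_{C_2}$, so that the monotonicity inequality combines correctly with the strict gain supplied by Lemma \ref{treelemma}.
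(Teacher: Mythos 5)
Your proposal is correct and follows essentially the same route as the paper's proof: pass to the characteristic $1$-tree $T'_{C_1}$, invoke Lemma \ref{treelemma} at the vertex $C_1$ of degree $\deg_T(C_1)\ge 3$, realize the chosen neighbour $c_2$ as the $k$-clique $C_2=(V(C_1)\setminus\{c_1\})\cup\{c_2\}$, and chain Lemma \ref{adlemma}, Theorem \ref{patialtheorem} and Lemma \ref{bilemma}. The only difference is that you spell out the identity $\deg_{T'_{C_1}}(C_1)=\deg_T(C_1)$, which the paper uses without comment; that is a welcome clarification, not a deviation.
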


\begin{proof}
We consider characteristic 1-tree $T'_{C_1}$. Since $deg_{T'_{C_1}}(C_1)\geq 3$, there exists a neighbour $c_{2}$ of $C_1$ in $T'_{C_1}$ such that $\mu(T'_{C_1};c_2)> \mu(T'_{C_1};C_1)$ by Lemma \ref{treelemma}.
Take $c_{1}\in C_1$, let $C_2=(C_1\setminus\{c_1\})\cup \{c_2\}$, then $C_2$ is a $k$-clique adjacent to $C_1$.  By Lemma \ref{adlemma}, $T'_{C_2}$ is isomorphic to the tree obtained by the partial Kelmans operation on $T'_{C_1}$ from $c_2$ to $C_1$. Thus, by Theorem \ref{patialtheorem}, we have
\begin{align*}
        \mu(T'_{C_2};C_2)\geq\mu(T'_{C_1};c_{2})>\mu(T'_{C_1};C_1).
\end{align*}
Then by Lemma \ref{bilemma}, we have
\begin{align*}
        \mu(T;C_2)>\mu(T;C_1).
\end{align*}
\end{proof}

Theorem \ref{MainTheorem} follows from Theorem \ref{k-trees} immediately.

\section{Concluding remarks}

By applying Theorem~\ref{mainKO}, we can easily derive the following conclusion
due to Wagner and Wang~\cite{Wagner2}.

\begin{proposition}[\cite{Wagner2}, \textbf{Lemma 3.1}]
\label{Col2.3}
Let $v$ be a leaf of a tree $T$, and let $u$ be a neighbour of $v$. Then
\[\mu(T;v)\geq \mu(T;u),\]
with equality if and only if $T$ is a path.
\end{proposition}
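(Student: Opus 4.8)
The plan is to obtain this as an immediate specialization of inequality (\ref{eq3}) in Theorem \ref{mainKO}, exploiting the fact that the Kelmans operation is trivial when the source vertex is a leaf. First I would record this triviality: with the notation of Section 3, since $v$ is a leaf its only neighbour is $u$, so $N_2 = N_T(v)\setminus N_T[u] = \emptyset$, and therefore $T\Kel{v}{u} = T$. This is precisely the degenerate case of the Kelmans operation noted earlier (either $N_1$ or $N_2$ empty gives $G\Kel{v}{u}\cong G$).

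Granting this identity, the inequality is then one line. Substituting $T\Kel{v}{u}=T$ into the second inequality of Theorem \ref{mainKO},
\[
\mu(T;v)\ \geq\ \mu(T\Kel{v}{u};u)\ =\ \mu(T;u),
\]
which is exactly the claimed bound.

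For the equality characterization I would invoke the equality clause attached to (\ref{eq3}), which states that equality holds if and only if the component of $T-v$ containing $u$ is a path with $u$ as one of its leaves. Because $v$ is a leaf of $T$, deleting it leaves the tree connected, so this component is the whole of $T-v$. Hence equality occurs exactly when $T-v$ is a path having $u$ as an endpoint; reattaching the leaf $v$ at $u$ (itself an endpoint of $T-v$) then exhibits $T$ as a path, and conversely every path clearly satisfies the condition. Thus equality holds if and only if $T$ is a path, as required.

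I do not expect any substantive obstacle. The only points requiring care are the bookkeeping that the Kelmans operation collapses to the identity when $v$ is a leaf, and the translation of the equality condition of (\ref{eq3}) — phrased in terms of the component of $T-v$ — into the single clean statement that $T$ is a path; the latter uses only that removing a leaf preserves connectedness.
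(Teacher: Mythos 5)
Your proof is correct and follows essentially the same route as the paper: both observe that the Kelmans operation from a leaf $v$ is the identity, so $T\Kel{v}{u}=T$, and then read off the inequality as a specialization of Theorem \ref{mainKO} (the paper invokes (\ref{eq2}) where you invoke (\ref{eq3}), an immaterial difference). If anything, your treatment is slightly more complete, since you carry out the translation of the equality clause of (\ref{eq3}) into ``$T$ is a path,'' which the paper's two-line proof leaves implicit.
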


\begin{proof}
As $v$ is a leaf of $T$,  $T= T\Kel{v}{u}$.
By (\ref{eq2}), $\mu(T,v)=\mu(T\Kel{v}{u},v)\ge \mu(T,u)$.
\end{proof}

An \emph{end} $k$-clique is a $k$-clique in a $k$-tree $T$ with degree $1$, i.e., it is contained in a unique $(k+1)$-clique of $T$.  We now generalize Proposition \ref{Col2.3} from trees to $k$-trees.

\begin{proposition}\label{PRO}
	Let $C_1$ be an end $k$-clique of a $k$-tree $T$ and $C_2$ be a $k$-clique adjacent to $C_1$. Then
	\begin{align*}
			\mu(T;C_1)\geq \mu(T;C_2),	
	\end{align*}
with equality if and only if $C_2$ is an end $k$-clique or $T$ is a path-type $k$-tree and $C_1$ contains a $k$-leaf.
\end{proposition}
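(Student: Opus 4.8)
The plan is to push the statement down to the characteristic $1$-trees, where an end $k$-clique becomes a leaf, and then reuse Theorem~\ref{patialtheorem} together with the idea behind Proposition~\ref{Col2.3}. By Lemma~\ref{bilemma} we have $\mu(T;C_i)=\mu(T'_{C_i};C_i)+k-1$ for $i=1,2$, so it suffices to show $\mu(T'_{C_1};C_1)\ge\mu(T'_{C_2};C_2)$. Write $S=T'_{C_1}$, let $c_1$ be the unique vertex of $V(C_1)\setminus V(C_2)$ and $c_2$ the unique vertex of $V(C_2)\setminus V(C_1)$. Since $C_1$ is an end $k$-clique, $\deg_T(C_1)=1$, and as $\deg_S(C_1)=\deg_T(C_1)$ this makes $C_1$ a leaf of $S$ whose unique neighbour is $c_2$. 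By Lemma~\ref{adlemma}, $T'_{C_2}$ is isomorphic, via the map $f$ of that lemma with $f(c_2)=C_2$, to the tree $S'$ obtained from $S$ by the partial Kelmans operation from $c_2$ to $C_1$ that moves the set $A_Q(C_1,C_2)$; hence $\mu(T'_{C_2};C_2)=\mu(S';c_2)$, and the whole problem becomes the tree inequality $\mu(S;C_1)\ge\mu(S';c_2)$.

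For the tree inequality I would \emph{complete} the partial operation to a full Kelmans operation. Because the leaf $C_1$ has $c_2$ as its only neighbour, the set $N_1$ of Section~3 is empty, so the full Kelmans operation $S\Kel{c_2}{C_1}$ produces a tree $S''\cong S$ in which the roles of the leaf $C_1$ and the hub $c_2$ are interchanged; in particular the isomorphism carries $c_2$ to $C_1$, giving $\mu(S'';c_2)=\mu(S;C_1)$. On the other hand $S''$ is obtained from $S'$ by one further partial Kelmans operation from $c_2$ to $C_1$, namely moving the still-remaining neighbours $N_T(C_2)\setminus\{c_1\}$ of $c_2$ onto $C_1$, so Theorem~\ref{patialtheorem} yields $\mu(S'';c_2)\ge\mu(S';c_2)$. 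Chaining these gives $\mu(S;C_1)=\mu(S'';c_2)\ge\mu(S';c_2)=\mu(T'_{C_2};C_2)$, as required. (Alternatively one can prove $\mu(S;C_1)\ge\mu(S';c_2)$ by a direct calculation from the local-mean-order formula of Section~2: with $\beta,\gamma$ the products $\prod(1+\cdot)$ over the moved and the kept branches at $c_2$ and $\theta,\omega$ the matching sums $\sum\frac{(\cdot)'}{1+(\cdot)}$, the difference reduces to $(1+\theta)\beta(\gamma-1)-\omega(1+\beta)\ge0$, which follows from Lemma~\ref{lemma2.1}, the inequality $\gamma-1\ge\sum\gamma_i\ge2\omega$, and $\beta\ge1$.)

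It remains to pin down equality, and this translation is the part that needs the most care. The inequality is tight exactly when Theorem~\ref{patialtheorem} is tight for the operation $S'\to S''$. Its case~(a) (no edge is moved) occurs iff $N_T(C_2)\setminus\{c_1\}=\varnothing$, i.e. $\deg_T(C_2)=1$, i.e. $C_2$ is an end $k$-clique. Its case~(b) forces $C_1$ to be a leaf of $S'$ already (so $A_Q(C_1,C_2)=\varnothing$) and exactly one remaining branch at $c_2$, which is a path; this makes $S=T'_{C_1}$ itself a path. The final step is to read ``$T'_{C_1}$ is a path'' back in $T$: using Lemmas~\ref{uniq} and~\ref{decom}, under our hypothesis the leaves of $T'_{C_1}$ are precisely $C_1$ together with the $k$-leaves of $T$ lying outside $V(C_1)$, so $T'_{C_1}$ is a path iff $T$ has exactly one $k$-leaf outside $C_1$; by Lemma~\ref{decom} this means $T=P_T(C_1,v)$ is a path-type $k$-tree in which $C_1$ is a simplicial $k$-clique, i.e. $T$ is a path-type $k$-tree and $C_1$ contains a $k$-leaf, and conversely. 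Combining the two cases gives exactly the stated equality condition. I expect the main obstacles to be verifying that the completion isomorphism indeed swaps $C_1$ and $c_2$ (so that $\mu(S'';c_2)=\mu(S;C_1)$) and correctly identifying the leaves of $T'_{C_1}$ for the equality translation.
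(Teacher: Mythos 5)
Your proof is correct, and it reaches the stated equality condition, but it routes the final step differently from the paper, so a comparison is worthwhile. Both arguments reduce to the characteristic $1$-trees via Lemma~\ref{bilemma} and use Lemma~\ref{adlemma} to relate $T'_{C_1}$ and $T'_{C_2}$ by a Kelmans-type operation; the difference is which Section~3 result carries the load. The paper works on $T'_{C_2}$: since $C_1$ is an end $k$-clique, $N_T(C_1)\setminus\{c_2\}=\emptyset$, so the partial Kelmans operation of Lemma~\ref{adlemma} turning $T'_{C_2}$ into $T'_{C_1}$ is in fact the \emph{full} Kelmans operation from $c_1$ to $C_2$, and a single application of inequality (\ref{eq2}) of Theorem~\ref{mainKO} gives $\mu(T'_{C_1};C_1)\ge\mu(T'_{C_2};C_2)$ together with the equality condition ($C_2$ a leaf of $T'_{C_2}$, or $T'_{C_2}$ a path with $c_1$ as a leaf). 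You instead work on $S=T'_{C_1}$, factor the full operation from $c_2$ to $C_1$ as the partial operation of Lemma~\ref{adlemma} (landing on $T'_{C_2}$) followed by a second partial operation that moves the remaining branches $N_T(C_2)\setminus\{c_1\}$, apply Theorem~\ref{patialtheorem} to that second step, and close the loop with the leaf-swap isomorphism between $S$ and its full Kelmans image, which is legitimate precisely because $C_1$ is a leaf of $S$. Your route costs the extra factorization and the swap verification; the paper's costs the (unstated but necessary) observation that the operation of Lemma~\ref{adlemma} becomes full under the end-clique hypothesis. Your equality analysis via cases (a) and (b) of Theorem~\ref{patialtheorem}, and the translation of ``$T'_{C_1}$ is a path'' into ``$T$ is path-type and $C_1$ contains a $k$-leaf'' by identifying the leaves of $T'_{C_1}$ with $\{C_1\}\cup\bigl(L(T)\setminus V(C_1)\bigr)$, lands on the same characterization the paper obtains (and is, if anything, spelled out more carefully than the paper's one-line translation); your parenthetical direct computation reducing the difference to $(1+\theta)\beta(\gamma-1)-\omega(1+\beta)\ge 0$ also checks out.
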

\begin{proof}
	Set $c_1$=$V(C_1)\setminus V(C_2)$ and $c_2$=$V(C_2)\setminus V(C_1)$. Let $\tilde{T}'_{C_2}$ be the tree obtained by the Kelmans operation on $T'_{C_2}$ from $c_1$ to $C_2$. Note that $T'_{C_1}\cong \tilde{T}'_{C_2}$ by an isomorphism mapping $C_1$ to $c_1$ and $c_2$ to $C_2$. By (\ref{eq2}) in Theorem \ref{mainKO}, $\mu(T'_{C_1};C_1)=\mu(\tilde{T}'_{C_2};c_1)\geq \mu(T'_{C_2};C_2)$, with equality if and only if $C_2$ is a leaf of $T'_{C_2}$ or $T'_{C_2}$ is a path with $c_1$ as its leaf. Then, by Lemma \ref{bilemma}, we have $\mu(T;C_1)\geq \mu(T;C_2)$, with equality if and only if $C_2$ is a leaf of $T'_{C_2}$ or $T'_{C_2}$ is a path with $c_1$ as its leaf, which implies $C_2$ is an end $k$-clique or $T$ is a path-type $k$-tree with $C_1$ containing a $k$-leaf.
\end{proof}
Theorem \ref{MainTheorem} shows that the maximum local mean order of a $k$-tree must occur at either an end $k$-clique or a $k$-clique of degree 2. The following example shows that it may occur at an end $k$-clique.

We call the initial $k$-clique in the recursive definition of a $k$-tree the \emph{base} $k$-clique.
\begin{definition}	(star-type $k$-tree) Fix an integer $k\geq1$.	
\begin{description}		
\item[(1)] The complete graph $K_k$ is a star-type $k$-tree.		
\item[(2)] If $S$ is a star-type $k$-tree, then so is the graph obtained from $S$ by joining a new vertex to the base $k$-clique.	
\end{description}
\end{definition}
Let $S_{n+k}$ be a star-type $k$-tree with order $n+k$ with $n\geq 3$. Suppose that $V(S_{n+k})=\{a_1,a_2,\cdots,a_k,b_1,b_2,\cdots,b_n\}$, where $C=\{a_1,a_2,\cdots,a_k\}$ is the base clique. Let $T_n$ be the $k$-tree obtained from $S_{n+k}$ by joining a new vertex $c_i$ adjacent to each vertex of the set $\{b_i\}\cup (C\setminus \{a_i\})$ for each $i=1,2,\cdots,n$. An example $T_3$ is shown in Figure \ref{fig6}. To the $k$-tree $T_n$, each $c_i$ is a $k$-leaf, the degree of the base clique $C$ is $n$ and each $k$ clique $\{b_i\}\cup (C\setminus \{a_i\})$ has degree 2, other $k$-cliques are all end $k$-cliques. By Theorem \ref{k-trees} and Proposition \ref{PRO}, the maximum local mean order of $T_n$ must occur at a $k$-clique of degree $1$.
\begin{figure}[!t]
\centering
\includegraphics[width=0.8\linewidth]{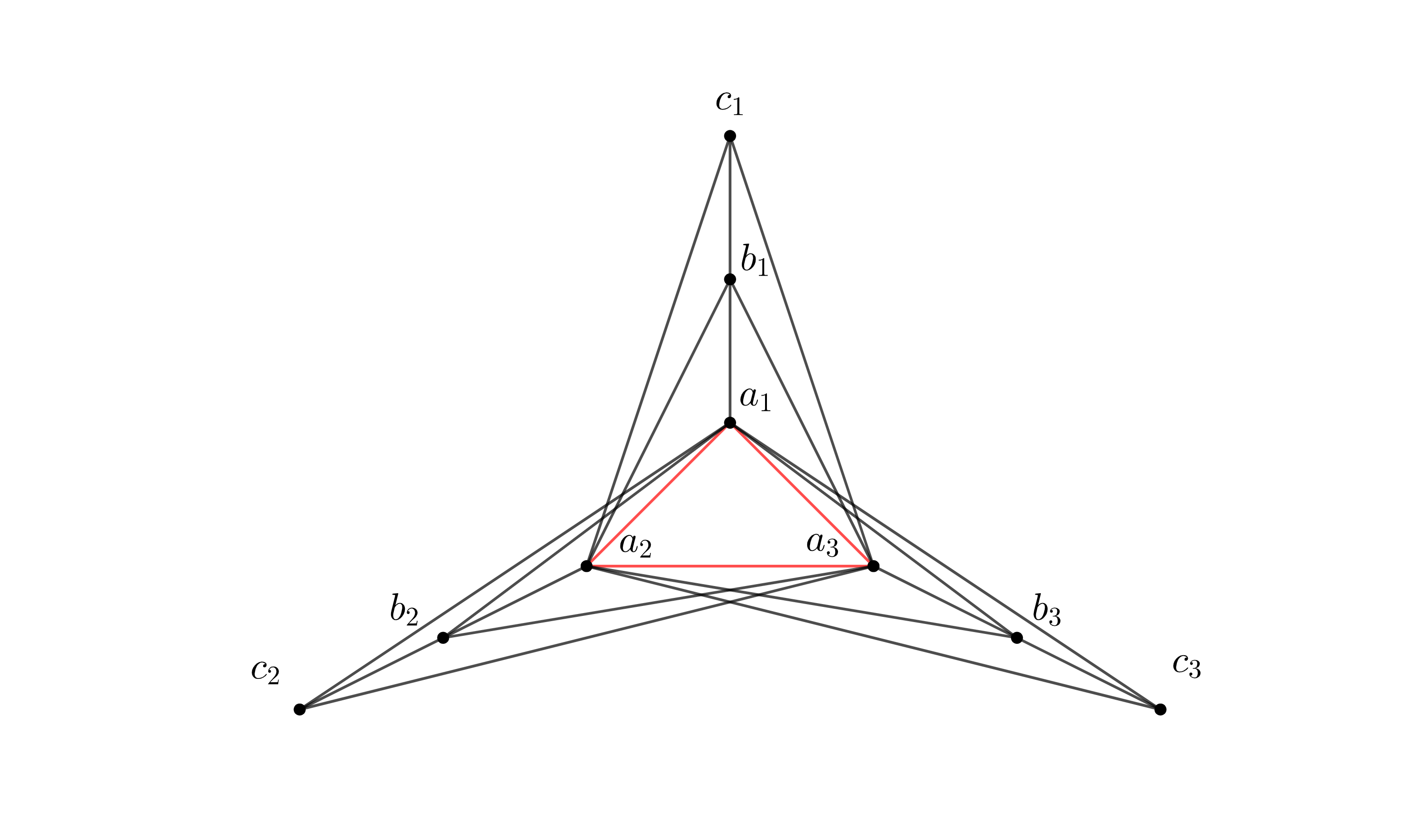}
\caption{A 3-tree $T_3$.}
\label{fig6}
\end{figure}

We now consider $k$-trees whose maximum local mean order
occurs at a k-clique of degree $2$. For trees, it has been pointed out in \cite{Wagner2}
that there are infinitely many trees for which the maximum local mean order
occurs at an vertex of degree $2$.
For example, if $T$ is the tree 
obtained from $P_{2n+1}$ by attaching two pendant edges to each end of $P_{2n+1}$, 
as shown in Figure~\ref{fig9}, 
the maximum local mean of $T$ 
occurs at a vertex of degree $2$ 
whenever $n\ge 7$ (see \cite{Wagner2}). 
For $k$-trees with $k\geq 2$, we have not found such  examples and it deserves further studying.

\begin{figure}[h]
	\centering
	\includegraphics[width=0.7\linewidth]{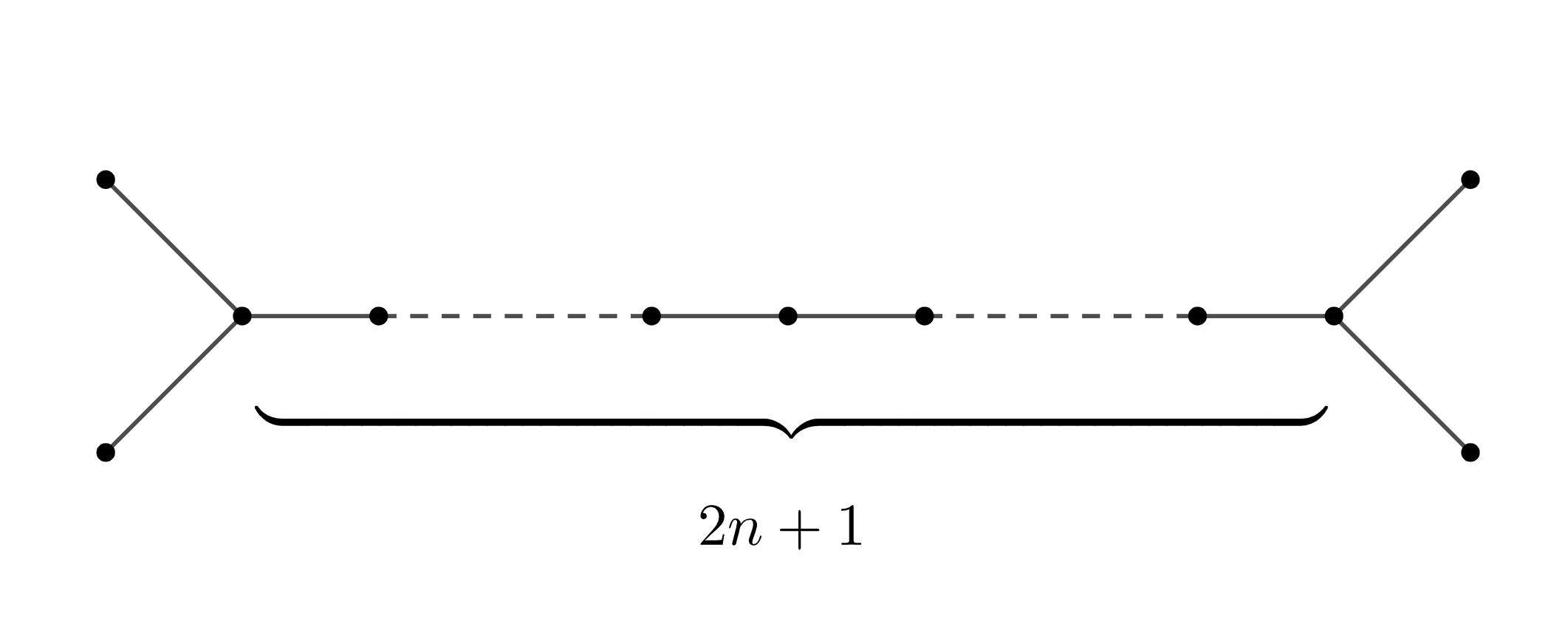}
	\caption{A tree obtained by attaching two pendant edges at each end of $P_{2n+1}$.}
	\label{fig9}
\end{figure}

\begin{problem}
	Is there a $k$-tree $T$ for $k\geq2$ such that the maximum local mean order of sub-$k$-trees containing a given $k$-clique occurs at a $k$-clique of degree 2, but not at an end $k$-clique?
\end{problem}

\section*{Acknowledgements}
This work is supported by NSFC (Nos. 12171402 and 12371340) and
the Ministry of Education,
Singapore, under its Academic Research Tier 1 (RG19/22).
Any opinions,
findings and conclusions or recommendations expressed in this
material are those of the authors and do not reflect the views of the
Ministry of Education, Singapore.

\section*{References}

\end{document}